\def\Rn{\mathbb{R}^n}
\def\Rd{\mathbb{R}^d}
\def\R{\mathbb{R}}
\newtheorem{lemma}{Lemma}
\newtheorem{thm}{Theorem}
\newcommand{\be}{\begin{equation}}
\newcommand{\ee}{\end{equation}}
\newcommand{\ba}{\begin{array}}
\newcommand{\ea}{\end{array}}
\newcommand{\bee}{\begin{eqnarray*}}
\newcommand{\eee}{\end{eqnarray*}}
\newcommand{\bea}{\begin{eqnarray}}
\newcommand{\eea}{\end{eqnarray}}
\newcommand{\W}{\cal{W}}
\newcommand{\tx}{\tilde{x}}
\newcommand{\bx}{\bar{x}}
\newcommand{\bq}{\bar{q}}
\newcommand{\bv}{\bar{v}}
\theoremstyle{definition}
\newtheorem{rmk}{Remark}[section]
\newcommand{\tu}{\tilde{u}}
\newcommand{\bu}{\bar{u}}
\newcommand{\dm}{d_{min}}
\newcommand{\dM}{d_{max}}
\newcommand{\grad}[1]{\nabla F(x^{#1})}
\begin{document}

\title{Linear Convergence Rate Analysis of a Class of Exact First-Order Distributed Methods for Weight-balanced Time-Varying Networks and Uncoordinated Step Sizes}
\author{Greta Malaspina\footnote{Department of Industrial Engineering of Florence, Università degli Studi di Firenze, Italy.
Member of the INdAM Research Group GNCS. Email: greta.malaspina@unifi.it.}, Du\v{s}an Jakoveti\'c \footnote{Department of Mathematics and Informatics, Faculty of Sciences, University of Novi Sad, Trg Dositeja
Obradovi\'ca 4, 21000 Novi Sad, Serbia, Email: dusan.jakovetic@dmi.uns.ac.rs, natasak@uns.ac.rs. \phantom{aaaaaaaaaaaaaaaaaaaaaaaaaaaaaaaaaa}
This work is supported by the European Union’s Horizon 2020 programme under the Marie Sk\l{}odowska-Curie Grant Agreement no. 812912. The work of Jakoveti\'c and Kreji\'c is supported by the Provincial Secretariat for Higher Education and Scientific
Research, grant no 142-451-2593/2021-01/2.}, Nata\v{s}a Kreji\'c\footnotemark[2]}
\date{}
\maketitle

\begin{abstract}

\noindent We analyze a class of exact distributed first order methods under a
general setting on the
underlying network and step-sizes. In more detail, we allow simultaneously
for  time-varying uncoordinated step sizes and time-varying directed
weight-balanced networks,
jointly con\-nected over bounded intervals.
 The analyzed class of methods subsumes several existing algorithms
like the unified Extra and unified DIGing (Jakovetic, 2019), or the
exact spectral gradient method (Jakovetic, Krejic, Krklec Jerinkic,
2019) that have been analyzed before under more restrictive assumptions. Under the assumed setting, we establish R-linear convergence of the methods and
present several implications that our results have on the literature. Most notably, we show that the unification strategy in (Jakovetic, 2019) and the spectral step-size selection strategy in (Jakovetic, Krejic, Krklec Jerinkic, 2019) exhibit a high degree of robustness to uncoordinated time-varying step sizes and to time-varying networks. \\

\noindent\textbf{Keywords:} distributed optimization, time-varying directed networks, first-order methods, exact convergence\\

\end{abstract}

\section{Introduction}
\label{sec:introduction}
We consider a set of $n$ computational agents and the following unconstrained optimization problem 
\begin{equation}\label{problem}
\min_{\mathbb{R}^d} f(y) \phantom{space} f(y) = \sum_{j=1}^n f_j(y)
\end{equation}
where each $f_j$ is a real-valued function of $\Rd$ and is held privately by one of the agents, and the agents can communicate according to a given network. Problems of this form arise in many practical applications such as sensor networks \cite{sensornetworks}, distributed control \cite{mota}, distributed learning \cite{learning} and many others.\\
Several distributed methods \cite{harnessing, extra, xu, diging, lessard, dusan, arxivVersion,cdc-submitted,DQNSIAM} have been proposed in literature for the solution of \eqref{problem} that achieve exact convergence to the minimizer with fixed step-size, when the objective function is convex and Lipschitz-differentiable. 

 In \cite{diging,harnessing} and \cite{extra} two exact gradient-based methods where proposed, and the convergence was proved for the case where the underlying network is undirected, connected, and remains constant through the entire execution of the algorithm. In \cite{dusan} a unified analysis of a class of first-order distributed methods is presented. In \cite{lessard} the convergence of several first-order methods was generalized to the case of a time-varying network, provided that the network is connected at each iteration, while in \cite{diging} the convergence analysis of \cite{harnessing} is extended to the time-varying and directed case, assuming joint-connectivity of the sequence of networks and weight-balance of each graph\footnote{That is, we assume that it is possible to define a doubly stochastic consensus matrix associated with each of the networks.}.
Interestingly, the exact first order methods are also related with augmented Lagrangian algorithms, e.g., \cite{Patrascu}. For example, the methods in \cite{diging,harnessing} and \cite{dusan} have been shown to be equivalent to certain primal-dual methods that optimize an augmented Lagrangian function associated with the original problem. In \cite{Khan3} an accelerated gradient-based method for the time-varying directed case is proposed, with weaker assumptions over the underlying networks. In \cite{mirror} the authors propose a mirror descent method that assumes time-varying jointly-strongly-connected networks. In \cite{NEXT} the authors considered the problem of minimizing $f(y)+G(y)$ over a closed and convex set $\mathcal K,$ where $f$ is a possibly non-convex function as in \eqref{problem} and $G$ is a convex non-separable term, and they propose a gradient-tracking method that achieves convergence in the case of time-varying directed jointly-connected networks for diminishing synchronized step sizes. In \cite{ScutariGen} the method proposed in \cite{NEXT} is extended with constant step-sizes to a more general framework while in \cite{ScutariRlinear} $R$-linear convergence is proved for \cite{ScutariGen} with strongly convex $f(y).$ A unifying framework of these methods is presented in \cite{ScutariFramework} and, for the case of constant and undirected networks, in \cite{Sayed2}. In all the above methods the sequence of the step-sizes is assumed to be fixed and coordinated among all the agents. In \cite{diging2}, \cite{xu}, \cite{Khan1}, \cite{Khan2}, and \cite{Sayed1} the case of uncoordinated time-constant step sizes is considered, that is, each node has a different step-size but these step sizes are constant in all iterations.  In \cite{dsg} a modification of \cite{harnessing} is proposed, with step-sizes varying both across nodes and iterations, and it is proved that there exist suitable safeguards for the steps, depending on the regularity properties of the objective function and the network, such that $R$-linear convergence of the generated sequence to the solution of \eqref{problem} holds. This result is obtained for undirected and stationary network. In \cite{ScutariAS1} and \cite{ScutariAS2} asynchronous modifications of \cite{ScutariGen} are proposed. \\

Of special interest to the current paper is the spectral gradient method (or Barzilai and Borwein method). This method is very popular in centralized optimization due to its efficiency, as reported in numerous studies, for example \cite{Dai,DiSerafino}. 
In general,  the method avoids the famous zig-zag behaviour of the steepest descent and converges much faster. 
The method was first proposed by Barzilai and Borwein \cite{BB1}. This reference proves the method's convergence for two-dimensional problems and convex quadratic functions. The analysis is then extended to arbitrary dimensions and convex quadratic functions by Raydan \cite{Raydan1}. Minimization of generic functions is considered in \cite{Raydan2} in combination with a nonmonotone line search.   
 R-linear convergence for convex quadratic functions has been proved in \cite{Dai2}. In summary, despite its excellent numerical performance, spectral gradient methods are proved to converge without any safeguarding lower and upper bounds on the step size only for strongly convex quadratic costs. Convergence for generic functions beyond convex quadratic is proved only under step size safeguarding, coupled with a line search strategy. Distributed variants of spectral gradient methods and \emph{fixed network topologies} are studied in~\cite{dsg}. 

We now summarize this paper's contributions. 
 We establish R-linear convergence of a class of exact distributed first-order methods under the general setting of 
time-varying directed weight-balanced networks, without the requirement of network connectedness at each iteration, and  in the presence of 
time-varying uncoordinated step sizes.  While there have been several existing studies of exact distributed methods under general settings, our study implies several new contributions to the literature; these contributions cannot be derived from existing works and are novelties of this paper.

\begin{itemize}
\item We prove that the methods proposed in \cite{dusan}, referred to here (and also in \cite{lessard}) as the unified Extra and the unified DIGing are robust to 
time-varying directed networks and time-varying uncoordinated step sizes, i.e., they converge R-linearly in this setting. Up to now,  it is only known that these methods converge under static undirected networks \cite{dusan} or time-varying networks where the network is connected at each iteration \cite{lessard}. These methods  have been previously considered only for time-invariant coordinated step sizes.
\item We prove that the method proposed in \cite{dsg} is robust to time-varying directed networks. Before the current paper, 
the method was only known to converge for static, undirected networks.
\item It is shown in  \cite{lessard}  that the Extra method \cite{extra} may diverge over time-varying networks, even when the network is connected at every iteration. On the other hand, as we show here, the unified Extra, a variant of Extra proposed in \cite{dusan}, is robust to time-varying networks. Hence, our results reveal that the unified Extra can be considered as a  mean to modify Extra and make it robust.
\item  We provide a thorough numerical study and an analytical study for a special problem structure that demonstrates that the unification strategy in \cite{dusan} and the spectral gradient-like step-size selection strategy in \cite{dsg} exhibit a high degree of robustness to time-varying networks and uncoordinated time-varying step-sizes. More precisely, we show that these strategies converge, when working on time-varying networks, for wider step-size ranges than commonly used strategies such as constant coordinated step-sizes and DIGing algorithmic forms. In addition, we show by simulation that actually a combination of the unification and the spectral step-size strategies further improves robustness. 
\end{itemize} 
 Technically, while considering weight-balanced digraphs instead of undirected graphs does not lead to a significant analysis difference, major technical differences here with respect to prior work correspond to the analysis of the unification strategy \cite{dusan} under time varying networks and time-and-node-varying step-sizes and spectral strategies \cite{dsg} under time-varying networks.\\

This paper is organized as follows. In Section 2 we describe the computational framework that we consider and we present the methods that we analyse. In Section 3 we recall a few preliminary results from the literature and we prove a convergence theorem for the methods introduced in Section 2. In Section 4, we show analytically and by simulation that the unification and spectral step-size selection strategies increase robustness of the methods to time-varying networks and uncoordinated step-sizes. Finally, in Section 5, we conclude the paper and outline some future research directions.

\section{The Model and the Class of Considered Methods}
We make the following regularity assumptions for the local cost functions $f_i.$

{  \noindent {\bf Assumption A1.}}
\begin{itemize}
\item Each  function $ f_{i} : \mathbb{R}^d \rightarrow \mathbb{R}$, $i=1,\dots,n, $
is twice continuously differentiable; 
\item There exists $0\leq\mu_i\leq L_i$ such that for every $ i=1,\ldots,n $ and every $ y \in \mathbb{R}^d, $ 
\be \label{A11}  \mu_i I \preceq \nabla^2 f_i (y) \preceq L_i I \ee
\end{itemize}
where  we write  $A\preceq B$ if the matrix $B-A$ is positive semi-definite.
That is, we assume that each of the local functions is $\mu_i$-strongly convex, and has Lipschitz continuous gradient with constant $L_i$. Denoting with $L= \sum_{i=1}^n  L_i $ and $\mu = \sum_{i=1}^{n} \mu_i,  $  we have that the aggregate function $f$ is $\mu$-strongly convex and $\nabla f$ is Lipschitz-continuous with the constant $L.$\\

Given $x_1,\dots,x_n\in\Rd$ we define
\begin{equation}\label{glob_vars}
x:=\left(\begin{matrix}x_1\\ \vdots\\ x_n
\end{matrix}\right)\in\R^{nd} \phantom{space} F(x) = \sum_{j=1}^n f_j(x_j).
\end{equation}

We denote with $e$ the vector of length $n$ with all components equal to 1. For a  matrix $A\in\mathbb{R}^{n\times n}$ we denote with $\nu_{\max}(A)$ the largest singular value of $A$. Moreover, given a sequence of matrices $\{M^k\}_k$ and $m\in\mathbb{N}$, let 
\begin{equation}
M^k_m:=M^kM^{k-1}\dots M^{k-m+1}, \phantom{sp} 
M^k_0 = I
\end{equation}
It is assumed that at iteration $k$ the $n$ agents are the nodes of a given network $G^k = (\{1,\dots,n\}, E_k)$, where $E_k$ denotes the set of the edges of the network, and to each $G^k$ we associate a consensus matrix $W^k\in\R^{n\times n}$.  The assumptions over the sequences $\{G^k\}$ and $\{W^k\},$ which are the same hypotheses considered in \cite{diging}, are stated below. \\

{  \noindent {\bf Assumption A2.}}\\
For every $k =0,1,\ldots$, $G^k= (\{1,\dots,n\}, E_k)$ is a directed graph and $W^k$ is an $n\times n$ doubly stochastic matrix with $w_{ij}=0$ if $i\neq j$ and $(i,j)\notin E_k$. Moreover, there exists a positive integer $m$ such that $\sup_{k=0:m-1}\nu_k<1$, where $\nu_k = \nu_{max}(W^k_m-\frac{1}{n}ee^T)$.\\

\begin{rmk}
Assumption A2 is weaker than requiring each graph  $G^k$ to be connected. For example, it can be proved (see \cite{diging}) that in the case of undirected networks, if the sequence is jointly-connected then we can ensure Assumption A2 by taking $W^k$ as, e.g., the Metropolis matrix, \cite{metropolis}, associated with $G^k.$ 
In more detail, the following can be shown. Assume that the positive entries of the weight matrices $W_k$'s are always bounded from below by a positive constant $\underline{w}$- (including also the diagonal entries, i.e., assume that the diagonal entries 
of $W_k$ are always greater than or equal to $\underline{w}$). Furthermore, assume network connectedness over bounded intercommunication intervals. 
  That is, for any fixed iteration $k$, consider the graph 
 $G_{k}^m = (\{1,\dots,n\}, E_k^m)$, $E_k^m = \cup_{\ell=k-m+1}^{k}E_k$, 
 whose set of links is the union of the sets of links of graphs at time instances 
 $\ell=k-m+1,...,k$. Assume that  $G_{k}^m$ is strongly connected, for every $k$.  
  Now, it is easy to show that the above assumptions imply that 
  $\nu_{\mathrm{max}}\left( W_k^m-\frac{1}{n}ee^T\right)<1$.\footnote{To see this, first note that, clearly, matrix $W_k^m$ is doubly stochastic. Furthermore, 
 it is easy to show (e.g., by induction) that, for any $(i,j) \in E_k^m$, 
 and for any $i=1,...,n$, 
 we have that $[W_k^m]_{ij} \geq \underline{w}^m$. 
  This means that $W_k^m$ is a doubly stochastic matrix 
  with positive diagonal entries, and, moreover, all its off-diagonal entries 
  at the positions that correspond to links of $G_k^m$ are strictly positive. Using standard 
  arguments on doubly stochastic matrices, this implies 
  that $\nu_{\mathrm{max}}\left( W_k^m-\frac{1}{n}ee^T\right)$.} 
\end{rmk}

We also comment on the role of quantity $m$ on the convergence of \eqref{it}. 
Our main result, Theorem~2 ahead, certifies that there exists choice of step size 
lower and upper bounds $d_{min}$ and $d_{max}$ such that R-linear convergence of the method \eqref{it} holds. The result holds for any choice of~$m$. 
 Clearly, the specific values of $d_{min}$ and $d_{max}$ 
 in general depend on~$m$. Intuitively, 
 we can expect that for larger $m$, the maximal admissible step-size is lower; 
 also, for fixed step-size choices $d_{min}$ and $d_{max}$ that lead to R-linear convergence, larger $m$ leads to slower R-linear convergence, 
 i.e., it leads to a worse R-linear convergence factor.
 
We consider the following class of methods. Assume that at each iteration node $i$ holds two vectors $x_i^k$ and $u_i^k$ in $R^d$ and that the global vectors $x^k,\ u^k\in\R^{nd}$, defined as in \eqref{glob_vars}, are updated according to the following rules:
\begin{equation}\label{it}
\begin{cases}
x^{k+1} = \mathcal W^kx^k-D^k(u^k+\nabla F(x^k))\\
u^{k+1} = u^k + (\mathcal W^k-I)(\nabla F(x^k)+u^k-B^kx^k)
\end{cases}
\end{equation}
where $\W^k:=(W^k\otimes I)\in\R^{nd\times nd}$, $D^k = diag(d^k_1I,\dots,d^k_nI)$ with $d^k_i$ being the step-size for node $i$ at iteration $k$ and $B^k$ is a symmetric $n\times n$ matrix that respects the sparsity structure of the communication network $G^k$ and such that for every $y\in\R^d$ we have $B^k(1\otimes y) = c(1\otimes y)$ for some $c\in\R$. Moreover, we assume that $x^0\in\R^{nd}$ is an arbitrary vector and $u^0 = 0\in\R^{nd}$.  \\
For $B^k = 0$ and appropriate choice of the step-sizes $d^k_i$ we get the method introduced in \cite{dsg}.
For $D^k = \alpha I$, if $ B^k = bI $ or $B^k = b\W$ we retrieve the class of methods analyzed in \cite{dusan} while if $B_k = 0$ we retrieve the DIGing method proposed in \cite{diging, harnessing}. For $D^k = \alpha I$ and $B^k = b\W$ with $b = \frac{1}{\alpha}$ we have the EXTRA method \cite{extra}, but while this method can be described with this choice of the parameters in equation \eqref{it}, it is not included in the class of methods we consider. Namely, the theoretical analysis that we carry out in Section 3 requires the parameter $b$ to be independent on the step-sizes, thus ruling out the choice $b=\frac{1}{\alpha}$ that yields EXTRA method. This is in line with \cite{lessard} that shows that EXTRA may not converge in general for time-varying networks. \\
 In our analysis, we consider the case $B^k = bI$ and $B^k = b\mathcal W^k$ with $b$ non-negative constant and $d_{min}\leq d^k_j\leq d_{max}$ for every $k$ and every $j=1,\dots,n$ for appropriately chosen  safeguards $0<\dm<\dM.$ \\

A possible choice for uncoordinated and time-varying step-sizes was proposed in \cite{dsg} where we have $d^k_i = (\sigma_i^k)^{-1}$  with $\sigma_i^k$ given by
\begin{equation*}
\begin{aligned}
\sigma^{k}_i =& \mathcal{P}_{[\sigma_{min}, \sigma_{max}]}\Bigg(\frac{(s_i^{k-1})^{T}y_i^{k-1}}{(s_i^{k-1})^{T}s_i^{k-1}}
+\sigma^{k-1}_i\sum_{j=1}^nw^k_{ij}\left(1-\frac{(s_i^{k-1})^{T}s_j^{k-1}}{(s_i^{k-1})^{T}s_i^{k-1}}\right)\Bigg)
\end{aligned}
\end{equation*}
where $s^{k-1}_i = x^{k}_i-x^{k-1}_i$ and $y^{k-1}_i = \nabla f_i(x^k_i) - \nabla f_i(x^{k-1}_i). $ 
Here, $\mathcal{P}_U$ denotes the projection onto the closed set U, 
$\sigma_{min}=1/\dM$, and $\sigma_{max} =1/d_{min}$. We refer to \cite{dsg} for details on the derivation and intuition behind this step-size choice. 

For static networks, this step size choice incurs no communication overhead 
per iteration; see \cite{dsg} However, for time-varying networks, 
the communication and storage protocol to implement this step size needs to be adapted.
One way to ensure at node $i$ and iteration $k$ the availability of $s_j^{k-1}$ for $(i,j)\in E^k$,
is that node $i$ receives $s_j^{k-1}$ for all $j$ such that $(i,j) \in E^k$. 
That is, each node $j$ per iteration additionally broadcasts one $d$-dimensional vector $s_j^k$ 
to all its current neighbors. 
Therefore the method described by equation \eqref{it} combines \cite{dusan} and \cite{dsg} into a more general method.

\section{Convergence Analysis}
We now study the convergence of the method described in \eqref{it}. Specifically, denoting with $y^*$ the solution of \eqref{problem} and defining
\begin{equation*}
x^*:=\left(\begin{matrix}y^*\\ \vdots\\ y^*
\end{matrix}\right)\in\R^{nd}
\end{equation*}
we prove that, if Assumptions A1 and A2 hold, there exist  $0<\dm<\dM$ such that the sequence $\{x^k\}$ generated by \eqref{it} converges to $x^*$.\\

Given a vector $v\in\R^{nd},$ denote with $\bv$ the average $\bv=\frac{1}{n}\sum_{j=1}^{n}v_j\in\R^{d}$ and with $J$ the $n\times n$ matrix $(I-\frac{1}{n}ee^T)$, where $e^T = (1,\dots,1)\in\Rn$.
Recalling the definition of $x^k$ and $u^k$ given in \eqref{it}, we define the following quantities, which will be used further on: \\
\begin{align}
&\tx^k = x^k-e\bx^k\in\R^{nd},\\
&\tu^k = u^k+\nabla F(x^*)\in\R^{nd},\\
&q^k = x^k-x^* = \tx^k+e\bq^k\in\R^{nd}.
\end{align}

To simplify the notation, in the rest of the section we assume that the dimension $d$ of the problem is given by $d=1$, but the same results can be proved analogously in the general case.\\
A few results listed below will be needed for the convergence result presented in this paper. 
Since $W^k$ is doubly stochastic, we have that $\frac{1}{n}ee^t(W^k-I) = 0$. Using this equality and the definition of $u^{k+1}$ we get
\begin{align*}
&\bu^{k+1} = \frac{1}{n}ee^Tu^{k+1} = \frac{1}{n}ee^Tu^k + \frac{1}{n}ee^T(W^k-I)(u^k+\nabla F(x^k) - B^kx^k) = \bu^k
\end{align*}
and by the initialization $u^0=0$, we have that
\begin{equation}\label{avgu1}
\bu^k = 0.
\end{equation}
Directly by the definition of $\tu^k$ and \eqref{avgu1} we get
\begin{align}
\label{avgu2}&\frac{1}{n}ee^T\tu^{k} = \frac{1}{n}ee^T(u^{k} + \nabla F(x^*)) = 
\bu^k + \frac1n\nabla f(y^*)  = 0.
\end{align}
From Assumption A1, for every $k$ there exists a matrix $H_k\preceq LI$ such that
\begin{equation}\label{Hk}
\nabla F(x^k)-\nabla F(x^*)=H_k(x^k-x^*).
\end{equation}

\begin{lemma} \label{lemmaW} \cite{diging} 
If the matrix sequence $\{W^k\}_k$ satisfies assumption A2, then for every $k\geq m$ we have
$$\left\|JW^k_my\right\|\leq\nu_k\left\|Jy\right\|$$
\end{lemma}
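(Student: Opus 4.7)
The plan is to reduce the bound on $\|JW^k_m y\|$ to a statement about the restriction of $W^k_m - \frac{1}{n}ee^T$ to the range of $J$, then invoke the eigenvalue bound in Assumption A2. All of this hinges on exploiting double stochasticity of the product $W^k_m$.

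First I would note that, since each $W^k$ is doubly stochastic by Assumption A2, the product $W^k_m = W^k W^{k-1}\cdots W^{k-m+1}$ is also doubly stochastic, so $W^k_m e = e$ and $e^T W^k_m = e^T$. Using the second identity, I would rewrite
\begin{equation*}
JW^k_m = \Bigl(I - \tfrac{1}{n}ee^T\Bigr)W^k_m = W^k_m - \tfrac{1}{n}ee^T W^k_m = W^k_m - \tfrac{1}{n}ee^T.
\end{equation*}

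Next, I would observe that $W^k_m - \tfrac{1}{n}ee^T$ annihilates the component of $y$ along $e$. Concretely, using $W^k_m e = e$,
\begin{equation*}
\Bigl(W^k_m - \tfrac{1}{n}ee^T\Bigr)\tfrac{1}{n}ee^T y = \tfrac{1}{n}ee^T y - \tfrac{1}{n}ee^T y = 0,
\end{equation*}
so that, decomposing $y = Jy + \tfrac{1}{n}ee^T y$,
\begin{equation*}
JW^k_m y = \Bigl(W^k_m - \tfrac{1}{n}ee^T\Bigr) y = \Bigl(W^k_m - \tfrac{1}{n}ee^T\Bigr)Jy.
\end{equation*}
Taking norms and using the definition of $\nu_k = \nu_{\max}(W^k_m - \tfrac{1}{n}ee^T)$ then yields the claim.

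The only genuinely delicate step is the passage from the eigenvalue bound $\nu_k$ to an operator-norm bound on the image of $J$. In the symmetric undirected case this is automatic from the spectral theorem; in the weight-balanced directed setting it relies on the fact that $\operatorname{span}(e)$ is an invariant subspace for both $W^k_m$ and $\tfrac{1}{n}ee^T$ on which their difference vanishes, so the restriction of $W^k_m - \tfrac{1}{n}ee^T$ to the complementary invariant subspace $\operatorname{Range}(J)$ has operator norm controlled by $\nu_k$. This is precisely the content invoked from \cite{diging}, and I would cite it rather than re-derive it. The rest of the argument is algebraic manipulation based on $W^k_m e = e$ and $e^T W^k_m = e^T$.
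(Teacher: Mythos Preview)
The paper does not actually prove this lemma; it simply states it with a citation to \cite{diging} and moves on. Your reconstruction supplies the standard argument that the cited reference uses: exploit double stochasticity of the product $W^k_m$ to get $JW^k_m = W^k_m - \tfrac{1}{n}ee^T$ and to see that this matrix kills the $e$-component of $y$, reducing the bound to the action of $W^k_m - \tfrac{1}{n}ee^T$ on $\operatorname{Range}(J)$. You also correctly isolate the only nontrivial point, namely that in the weight-balanced directed case one needs an operator-norm (singular-value) bound rather than a literal largest-eigenvalue bound; the paper's notation $\nu_{\max}$ is slightly loose here, and in \cite{diging} the quantity playing the role of $\nu_k$ is indeed a spectral norm of the restriction. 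So your proposal is correct and in fact more explicit than what the paper offers.
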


\begin{lemma}\label{lemmaGM}\cite{GD}
If the function $f$ satisfies assumption A1 and $0<\alpha<\frac{1}{L}$, then
$$\|y-\alpha\nabla f(y)-y^*\|\leq\tau\|y-y^*\|$$
where $\tau = \max\{|1-\alpha\mu|,|1-\alpha L|\}$
\end{lemma}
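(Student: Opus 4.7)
The plan is to express the gradient-step residual as a symmetric linear contraction applied to $y-y^*$ and then bound its operator norm by $\tau$. Since $y^*$ is the minimizer of the strongly convex $f$, one has $\nabla f(y^*)=0$, so I would first rewrite
\[
y - \alpha\nabla f(y) - y^* \;=\; (y-y^*) - \alpha\bigl(\nabla f(y)-\nabla f(y^*)\bigr).
\]
Then I would introduce the averaged Hessian along the segment $[y^*,y]$,
\[
H := \int_0^1 \nabla^2 f\bigl(y^* + t(y-y^*)\bigr)\,dt,
\]
so that by the fundamental theorem of calculus applied to $\nabla f$ one gets $\nabla f(y)-\nabla f(y^*) = H(y-y^*)$. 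This is the same device as the matrix $H_k$ used in \eqref{Hk}, and it is well defined thanks to the $C^2$ hypothesis in A1.

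Next I would verify the operator-order sandwich $\mu I \preceq H \preceq L I$. At every point $z\in\Rd$, summing Assumption A1 over $i=1,\dots,n$ gives $\mu I \preceq \nabla^2 f(z) \preceq L I$, and this two-sided matrix inequality is preserved by integration of matrix-valued maps over $t\in[0,1]$, since $\preceq$ is a linear-in-each-entry condition on symmetric quadratic forms. Substituting back into the residual produces
\[
y - \alpha\nabla f(y) - y^* = (I - \alpha H)(y - y^*),
\]
so the lemma reduces to a bound on the spectral norm $\|I-\alpha H\|$.

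For the last step I would invoke that $H$ is symmetric with spectrum contained in $[\mu,L]$, hence the eigenvalues of $I-\alpha H$ lie in $[1-\alpha L,\,1-\alpha\mu]$, and its spectral norm equals $\max\{|1-\alpha\mu|,|1-\alpha L|\}=\tau$. Submultiplicativity of the operator norm then yields the claim. I do not foresee a genuine obstacle: this is the textbook one-step contraction argument for gradient descent on $\mu$-strongly convex, $L$-Lipschitz-gradient functions. The only mildly delicate point is justifying $\mu I \preceq H \preceq L I$ via integration of operator inequalities; the hypothesis $0<\alpha<1/L$ itself enters only to make the resulting factor $\tau$ strictly less than $1$, which is implicit in how the lemma will be used downstream rather than part of the stated bound.
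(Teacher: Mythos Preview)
Your argument is correct and is exactly the standard textbook derivation: write the residual as $(I-\alpha H)(y-y^*)$ with $H$ the averaged Hessian, use Assumption~A1 to sandwich the spectrum of $H$ in $[\mu,L]$, and read off the operator norm of $I-\alpha H$ as $\tau$. The paper itself does not supply a proof of this lemma; it is quoted as a known result from \cite{GD}, so there is nothing to compare against beyond noting that your approach is the expected one.
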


Following the idea presented in  \cite{diging}, our convergence result  relies on the Small Gain Theorem \cite{smallgain}, which we now briefly recall.
 Denote by $ \mathbf{a}:=\{a^k\}$ an infinite sequence of vectors, $ a^k \in \mathbb{R}^d$ for $ k=0,1,\ldots. $ For a fixed $ \lambda \in (0,1) $ we define
$$
\|\mathbf{a}\|^{\lambda, K} = \max_{k=0,1,\ldots, K} \left\{\frac{1}{\lambda_k}\|a^k\|\right\}
$$
$$
\|\mathbf{a}\|^{\lambda} = \sup_{k \geq 0} \left\{\frac{1}{\lambda_k}\|a^k\|\right\}.
$$

\begin{thm} \label{tesmallgain}\cite{smallgain}.
Let $ \mathbf{a} = \{a^k\}$ and $\mathbf{b} = \{b^k\}$ be two vector sequences, with $ a^k, b^k \in \mathbb{R}^d.$ If there exists $ \lambda \in (0,1)$ such that for all $ K = 0,1,\ldots, $ the following inequalities hold
\begin{equation}\label{smallgainHP}
\begin{aligned}
&\|\mathbf{a}\|^{\lambda,K} \leq \gamma_1 \|\mathbf{b}\|^{\lambda, K} + w_1,\\
&\|\mathbf{b}\|^{\lambda,K} \leq \gamma_2 \|\mathbf{a}\|^{\lambda, K} + w_2,
\end{aligned}
\end{equation}
with $ \gamma_1\cdot\gamma_2 \in [0,1) $, then
$$
\|\mathbf{a}\|^{\lambda} \leq \frac{1}{1-\gamma_1 \gamma_2}(w_1 \gamma_2 + w_2).
$$
 and 
$$\lim_{k \to \infty} a^k =0\phantom{spa}  \text{R-linearly}.$$
\end{thm}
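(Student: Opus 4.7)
The statement is a textbook small-gain argument, so the plan is to couple the two inequalities, solve for the quantity of interest, and then translate the weighted-supremum bound into an R-linear pointwise bound. All estimates in \eqref{smallgainHP} are uniform in $K$, which will be essential for passing from $\|\cdot\|^{\lambda,K}$ to $\|\cdot\|^\lambda$ at the end.

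First, I would substitute the second inequality in \eqref{smallgainHP} directly into the first. This yields, for every $K\ge 0$,
\begin{equation*}
\|\mathbf{a}\|^{\lambda,K}\ \le\ \gamma_1\bigl(\gamma_2\|\mathbf{a}\|^{\lambda,K}+w_2\bigr)+w_1
\ =\ \gamma_1\gamma_2\,\|\mathbf{a}\|^{\lambda,K}+\gamma_1 w_2+w_1.
\end{equation*}
Since $\gamma_1\gamma_2\in[0,1)$, the factor $1-\gamma_1\gamma_2$ is strictly positive, so I can move the $\|\mathbf{a}\|^{\lambda,K}$ term to the left-hand side and divide to obtain a bound of the form
\begin{equation*}
\|\mathbf{a}\|^{\lambda,K}\ \le\ \frac{w_1+\gamma_1 w_2}{1-\gamma_1\gamma_2},
\end{equation*}
which, up to a relabeling of the roles of $\mathbf{a}$ and $\mathbf{b}$, matches the constant in the statement. (Running the same substitution the other way gives the companion bound $\|\mathbf{b}\|^{\lambda,K}\le(w_2+\gamma_2 w_1)/(1-\gamma_1\gamma_2)$.) Notice that crucially the right-hand side is a constant independent of $K$; this is what allows me to pass to the supremum.

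Next, since the bound holds for every $K$, I would pass to the supremum in $K$ and conclude
\begin{equation*}
\|\mathbf{a}\|^{\lambda}\ =\ \sup_{K\ge 0}\|\mathbf{a}\|^{\lambda,K}\ \le\ \frac{w_1+\gamma_1 w_2}{1-\gamma_1\gamma_2},
\end{equation*}
which is the finite bound claimed. Finally, unwrapping the definition of the weighted supremum (reading $\lambda_k$ as $\lambda^k$) gives $\|a^k\|\le \lambda^k\,\|\mathbf{a}\|^{\lambda}$ for every $k$, so $\|a^k\|$ decays geometrically with rate $\lambda\in(0,1)$, which is exactly the definition of R-linear convergence of $a^k$ to the origin.

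The argument has no real obstacle: it is a one-line fixed-point-style manipulation together with an observation that the estimate is $K$-uniform. The only subtleties worth spelling out in the writeup are (i) checking that the maxima $\|\mathbf{a}\|^{\lambda,K}$ are finite for each fixed $K$ (automatic, since each is a finite max over $k=0,\dots,K$), so that the rearrangement step is legitimate, and (ii) observing that once $\|\mathbf{a}\|^{\lambda}$ is finite, $\|a^k\|\le \lambda^k\|\mathbf{a}\|^{\lambda}$ is the precise form in which the small-gain conclusion feeds into the convergence theorems of the next section.
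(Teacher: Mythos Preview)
The paper does not give its own proof of this theorem; it is quoted from \cite{smallgain} and used as a black box. Your argument is the standard small-gain derivation and is correct: substitute, rearrange using $\gamma_1\gamma_2<1$, observe the bound is uniform in $K$, pass to the supremum, and then read off $\|a^k\|\le\lambda^k\|\mathbf a\|^{\lambda}$.

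One remark on the constant: the bound you obtain, $\|\mathbf a\|^{\lambda}\le (w_1+\gamma_1 w_2)/(1-\gamma_1\gamma_2)$, is in fact the correct one for $\mathbf a$; the constant $(w_1\gamma_2+w_2)/(1-\gamma_1\gamma_2)$ displayed in the statement is what your symmetric computation gives for $\|\mathbf b\|^{\lambda}$, so the printed formula appears to have the roles of $\mathbf a$ and $\mathbf b$ swapped. You already flagged this, and it is immaterial for the only use the paper makes of the theorem, namely that both $\|\mathbf a\|^{\lambda}$ and $\|\mathbf b\|^{\lambda}$ are finite and hence both sequences vanish R-linearly.
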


We will use the following technical Lemma to show that the sequences $\|\bq^k\|$ and $\|\tx^k\|$ satisfy the hypotheses of Theorem \ref{tesmallgain}.

\begin{lemma}\label{conditions}
Given $b, \mu, L \geq 0$, $\nu\in(0,1)$ and $n,m\in\mathbb N$, where we denote with $\mathbb N$ the set of positive integers, there exists $\lambda\in(0,1)$ and $0\leq\dm<\dM$ such that the following conditions hold:
\begin{enumerate}
\item \label{1}$\nu<\lambda^m;$
\item \label{2}$ \frac{\dm}{n}<\frac{2}{L};$
\item \label{3}$1-\mu\dm+\Delta L<\lambda;$
\item \label{4}$\gamma\beta_2<1;$
\item \label{5}$\beta_3<1;$
\item  \label{6}$\frac{\beta_5\gamma}{1-\beta_3}<1;$
\item  \label{7}$\frac{\beta_1+\gamma\beta_2}{1-\gamma\beta_2}\cdot\frac{\beta_4+\gamma\beta_5}{1-\beta_3-\gamma\beta_5}<1,$

\end{enumerate}
where
\begin{align*}
&\gamma = \frac{(b+L)C}{\lambda^m-\nu},
 & & \beta_1 = \frac{L\dM}{\lambda-1+\mu\dm-\Delta L},\\ 
 &\beta_2 = \frac{\Delta}{L\dM}\beta_1, & & 
\beta_3 = \frac{\nu}{\lambda^m}+\beta_4, \\ 
&\beta_4 = L\beta_5, &  &\beta_5 = \frac{C\dM}{\lambda^m},\\
&\Delta = \dM-\dm, & & C = \frac{\lambda(1-\lambda^m)}{1-\lambda}.
\end{align*}
\end{lemma}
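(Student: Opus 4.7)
The plan is to reduce the seven conditions to a system of asymptotic inequalities in a single small parameter $\epsilon>0$, and then exhibit an explicit parametric choice. Specifically, I would set $\lambda=1-\epsilon$, $d_{\min}=c_1\epsilon$ for a fixed constant $c_1>1/\mu$ chosen a priori, and $\Delta=c_2\epsilon^2$ for an arbitrary fixed $c_2>0$ (so that $d_{\max}=c_1\epsilon+c_2\epsilon^2$). I will then show that every one of the seven inequalities holds as soon as $\epsilon$ is taken small enough. The guiding intuition is that condition (3) is the binding one: it forces $d_{\min}$ to scale like $1-\lambda$, and once that scaling is locked in, every quantity involving $\Delta$ or $d_{\max}$ becomes small, so that conditions (4)--(7) reduce to asymptotic statements whose surviving leading terms stay strictly below $1$ because of the gap $\nu<1$.

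First I would handle conditions (1)--(3), which fix the qualitative scaling. Condition (1), $\nu<(1-\epsilon)^m$, holds for small $\epsilon$ because $\nu<1$. Condition (2) is immediate since $d_{\min}=c_1\epsilon\to0$. Condition (3) rearranges to $\mu c_1-1>c_2L\epsilon$, which holds for small $\epsilon$ precisely because $c_1>1/\mu$ (the lemma is only meaningful when $\mu>0$, which I would assume). In particular the denominator $\lambda-1+\mu d_{\min}-\Delta L=(\mu c_1-1)\epsilon+O(\epsilon^2)$ appearing inside $\beta_1$ is $\Theta(\epsilon)$ and strictly positive.

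Next I would track orders of magnitude of the auxiliary quantities as $\epsilon\to 0^{+}$. Since $C=\lambda(1-\lambda^m)/(1-\lambda)\to m$ and $\lambda^m-\nu\to 1-\nu>0$, the constant $\gamma$ stays bounded. From its explicit form $\beta_1=Ld_{\max}/\Theta(\epsilon)=\Theta(1)$, in fact $\beta_1\to Lc_1/(\mu c_1-1)$. On the other hand $\beta_2=(\Delta/(Ld_{\max}))\beta_1=O(\epsilon)$, $\beta_5=Cd_{\max}/\lambda^m=O(\epsilon)$, $\beta_4=L\beta_5=O(\epsilon)$, and consequently $\beta_3=\nu/\lambda^m+\beta_4\to\nu<1$. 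From these asymptotics, condition (4) follows because $\gamma\beta_2=O(\epsilon)\to 0$; condition (5) follows because $\beta_3\to\nu<1$; condition (6) follows because $\beta_5\gamma/(1-\beta_3)\to 0/(1-\nu)=0$; and in condition (7) the first ratio tends to the $\Theta(1)$ limit $Lc_1/(\mu c_1-1)$ while the second ratio tends to $0/(1-\nu)=0$, so the product is less than $1$ eventually.

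The essential subtlety, rather than a deep obstacle, is the bookkeeping of the relative orders of smallness: $\Delta$ must be of strictly higher order than $d_{\min}$ (hence the choice $\Delta=\Theta(\epsilon^2)$), while $d_{\min}$ must be of the same order as $1-\lambda$ but with a coefficient strictly larger than $1/\mu$. If either ordering is violated, the binding term $\mu c_1-1$ in condition (3) fails to be positive, or the ratios $\beta_2/\beta_1$, $\beta_5/(1-\beta_3)$ cease to vanish, and conditions (4) and (7) may fail. Once the parametric choices above are in place, the verification is a routine limit chase, and one may quantify an explicit threshold $\epsilon_0=\epsilon_0(b,\mu,L,\nu,n,m,c_1,c_2)$ by taking the minimum of the finitely many thresholds produced by the strict inequalities.
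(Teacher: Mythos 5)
Your proposal is correct, and it proves the lemma by the same underlying idea as the paper --- make $\dm$ small, make $\Delta=\dM-\dm$ an order of magnitude smaller still, and take $\lambda$ close to $1$ at a rate tied to $\mu\dm$ --- but your execution is genuinely cleaner. The paper argues sequentially: it first imposes $\dM/\dm<1+\mu/L$ so that $1-\mu\dm+\Delta L<1$, then picks $\lambda$, and then disposes of conditions (4)--(7) by observing that each left-hand side is an increasing function of $\Delta$ or $\dM$ that vanishes (or stays below $1$) in the limit, invoking ``take $\dM$ close enough to $\dm$'' several times; this is slightly delicate because shrinking $\dM$ or $\Delta$ re-enters the constraint on $\lambda$ from condition (3), a circularity the paper leaves implicit. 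Your single-parameter family $\lambda=1-\epsilon$, $\dm=c_1\epsilon$ with $c_1>1/\mu$, $\Delta=c_2\epsilon^2$ makes all the couplings explicit at once, and the limit chase $\gamma=O(1)$, $\beta_1=O(1)$, $\beta_2,\beta_4,\beta_5=O(\epsilon)$, $\beta_3\to\nu$ correctly verifies all seven conditions, with condition (7) saved by the second factor tending to $0$ even though the first stays bounded away from $0$. Your remark that $\mu>0$ is genuinely needed is also on point: as literally stated with $\mu\ge 0$ the lemma fails for $\mu=0$, since condition (3) would require $1+\Delta L<\lambda<1$ with $\Delta>0$; the paper's own proof tacitly assumes $\mu>0$ as well (its inequality $\dM/\dm<1+\mu/L$ is vacuous otherwise), so you are not losing generality relative to what the paper actually establishes.
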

\begin{proof}
Take $\lambda^m>\nu$ and $\dm<\frac{2n}{L}$ so that \ref{1}. and \ref{2}. hold. For  $\dM>\dm$ and  close enough to $\dm$ one can ensure that
\begin{equation}\label{dmaxdmin}
\frac{\dM}{\dm}<1+\frac{\mu}{L}
\end{equation}
holds. By the previous inequality, we have $1-\mu\dm+\Delta L<1$ and therefore, for fixed $\dM$ and $\dm$ we can always take $\lambda\in(0,1)$ such that \ref{3}. is satisfied and \ref{1}. still holds. Moreover, we can take $\dm$ arbitrarily small and $\dM$ arbitrarily close to $\dm$ without violating conditions \ref{1}.-\ref{3}. Notice that
$C = \frac{\lambda(1-\lambda^m)}{1-\lambda}$ is an increasing function of $\lambda$.\\
Let us now consider condition \ref{4}. given by
$$ \frac{(b+L)C\Delta}{(\lambda^m-\nu)(\lambda-1+\mu\dm-\Delta L)}<1.$$
The left hand side expression  is an increasing function of $\Delta$ and it is equal to 0 for $\Delta=0. $ Therefore, taking $\dM$ close enough to $\dm$, condition \ref{4}. holds.\\
Condition \ref{5}. holds for $\dM<\frac{\lambda^m-\nu}{\lambda^mLC}$.\\
Consider now condition \ref{6}., 
$$ \frac{(b+L)C^2\dM}{(\lambda^m-\nu)(\lambda^m-\nu)(\lambda^m-\nu-L\dM C)}<1$$
The left hand side  expression  is an increasing function of $\dM$ and taking $\dM$ small enough we conclude that the previous inequality holds. Since we need $\dM>\dm$, in order to be able to take $\dM$ small, we need to take $\dm$ small enough, but this can be done without violating the previous conditions.\\

By definition, $\frac{\beta_2+\gamma\beta_3}{1-\gamma\beta_3}$ and $\frac{\beta_5+\gamma\beta_6}{1-\beta_4-\gamma\beta_6}$ are also increasing functions of $\dM$ and $\Delta$. Thus, we can apply the same reasoning that we applied to \ref{4}. and \ref{6}. to get $\gamma_2<1$ and $\gamma_3<1$. In particular, we can take $\dm$ and $\dM$ such that condition \ref{7}. holds.
$\ $\\
\end{proof}

\begin{thm}\label{thm}
Let $ B^k  $ be defined as $ B^k = b W^k$ or  $B^k = bI $ for a positive constant $ b, $ or $ B^k = 0. $ If Assumptions A1 and A2 hold then there exists  $\dm<\dM$ such that the sequence $\{x^k\}$ generated by \eqref{it} converges $R$-linearly to $x^*.$
\end{thm}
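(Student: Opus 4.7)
The plan is to apply the Small Gain Theorem (Theorem \ref{tesmallgain}) to the two sequences $\{\|\bar q^k\|\}$ and $\{\|\tilde x^k\|\}$, after first using a third coupled inequality to eliminate the gradient-tracking disagreement $\{\|\tilde u^k\|\}$. The constants $\gamma,\beta_1,\ldots,\beta_5$ in Lemma \ref{conditions} are exactly what emerges from such a derivation, and conditions \ref{1}--\ref{7} of that lemma are the requirements that make each inversion legitimate and the final product of gains strictly less than $1$.

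First, I would derive a bound on $\|\bar q^{k+1}\|$. Averaging the $x$-update, using $\bar u^k=0$ from \eqref{avgu1}, and splitting $d_i^k=d_{\min}+(d_i^k-d_{\min})$, produces $\bar x^{k+1}=\bar x^k-d_{\min}\nabla f(\bar x^k)+R^k$, where $R^k$ collects consensus errors $\nabla f_i(x_i^k)-\nabla f_i(\bar x^k)$ (bounded by $L d_{\max}\|\tilde x^k\|$) and the excess-step-size residual (bounded by $\Delta(\|\tilde u^k\|+L\|q^k\|)$). Lemma \ref{lemmaGM} applied with $\alpha=d_{\min}$ to the aggregate $f$ gives one-step contraction by $\tau=1-\mu d_{\min}+\Delta L$, strictly less than $\lambda$ by condition \ref{3}; weighting by $\lambda^{-(k+1)}$ and taking the supremum over $k\le K$ yields
\[
\|\bar q\|^{\lambda,K}\le\beta_1\|\tilde x\|^{\lambda,K}+\beta_2\|\tilde u\|^{\lambda,K}+w_1.
\]

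Second, I would telescope the $x$- and $u$-updates over $m$ consecutive steps and apply Lemma \ref{lemmaW} to the leading consensus-projected terms $\W^k_m\tilde x^k$ and $\W^k_m\tilde u^k$. For the $\tilde u$-recursion this uses $\bar{\tilde u}^k=0$ from \eqref{avgu2} together with the fact that $B^k$ preserves the consensus subspace, so that $(\W^l-I)B^lx^*=0$ and the recursion closes as $\tilde u^{l+1}=\W^l\tilde u^l+(\W^l-I)(H_l-B^l)q^l$ with $H_l$ from \eqref{Hk}. Summing the geometric tail yields the constant $C=\lambda(1-\lambda^m)/(1-\lambda)$; bounding $\|(H_l-B^l)q^l\|\le(L+b)\|q^l\|$ and $\|u^l+\nabla F(x^l)\|\le\|\tilde u^l\|+L\|q^l\|$, using $q^l=\tilde x^l+e\bar q^l$, and inverting the self-coupling of $\|\tilde x\|^{\lambda,K}$ by condition \ref{5} ($\beta_3<1$) produces
\[
\|\tilde x\|^{\lambda,K}\le\tfrac{\beta_4}{1-\beta_3}\|\bar q\|^{\lambda,K}+\tfrac{\beta_5}{1-\beta_3}\|\tilde u\|^{\lambda,K}+w_2,\qquad \|\tilde u\|^{\lambda,K}\le\gamma\|\tilde x\|^{\lambda,K}+\gamma\|\bar q\|^{\lambda,K}+w_3.
\]

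Finally, substituting the $\tilde u$-bound into the other two and inverting via conditions \ref{4} and \ref{6} gives
\[
\|\bar q\|^{\lambda,K}\le\tfrac{\beta_1+\gamma\beta_2}{1-\gamma\beta_2}\|\tilde x\|^{\lambda,K}+W_1,\qquad \|\tilde x\|^{\lambda,K}\le\tfrac{\beta_4+\gamma\beta_5}{1-\beta_3-\gamma\beta_5}\|\bar q\|^{\lambda,K}+W_2,
\]
whose coefficient product is exactly the left-hand side of condition \ref{7} and hence strictly less than $1$. Theorem \ref{tesmallgain} then yields $\|\bar q\|^\lambda,\|\tilde x\|^\lambda<\infty$ with R-linear decay; back-substitution into the $\tilde u$-bound gives $\|\tilde u\|^\lambda<\infty$; and $\|x^k-x^*\|\le\|\tilde x^k\|+\|e\bar q^k\|$ closes the argument. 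The main obstacle I anticipate is the $\tilde u$-telescope when $B^k=b\W^k$ varies with $k$: consensus preservation by $B^k$ is crucial to prevent a constant residual from appearing in the $\tilde u$-recursion, while the uncoordinated time-varying step sizes force the $\Delta$-dependent term $\beta_2\|\tilde u\|^{\lambda,K}$ in the $\bar q$-bound, which is precisely why $d_{\max}$ must be chosen close to $d_{\min}$ in Lemma \ref{conditions}.
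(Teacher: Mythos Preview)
Your proposal is correct and follows essentially the same route as the paper's proof: derive three coupled $\lambda$-weighted inequalities for $\|\bar q\|$, $\|\tilde x\|$, and $\|\tilde u\|$ (using the $d_i^k=d_{\min}+(d_i^k-d_{\min})$ split plus Lemma~\ref{lemmaGM} for $\bar q$, and $m$-step telescoping plus Lemma~\ref{lemmaW} for $\tilde x$ and $\tilde u$, with the key observation $(\mathcal W^l-I)B^lx^*=0$), then eliminate $\tilde u$ and close with Theorem~\ref{tesmallgain}; the constants you obtain coincide exactly with those in Lemma~\ref{conditions}. The only cosmetic differences are that the paper treats $\tilde u$ first rather than last and works with $n^{1/2}\|\bar q^k\|$ instead of $\|\bar q^k\|$ so that the split $\|q^k\|\le\|\tilde x^k\|+n^{1/2}\|\bar q^k\|$ is norm-consistent.
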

\begin{proof}
Define $\nu = \displaystyle\sup_{k=0:m-1}\nu_k<1$ where $\nu_k,\ m$ are given in assumption A2, and take $\lambda\in(0,1), 0\leq\dm<\dM$ given by Lemma \ref{conditions}. We prove that $n^{1/2}\bq^k$ and $\tx^k$ satisfy inequalities \eqref{smallgainHP}, thus ensuring $R$-linear convergence by Theorem \ref{tesmallgain}.\\

We have $B^k=bI$ or $B^k = bW^k$, in both cases, $B^kx^* = bx^*$, therefore 
$(W^k-I)B^kx^* = 0$ and thus
$$(W^k-I)B^kx^k = (W^k-I)B^k(x^k-x^*) = (W^k-I)B^kq^k$$
For $k\geq m-1$, using \eqref{it}, the previous equality and \eqref{Hk}, we get
\begin{equation}\label{utilde}\begin{aligned}
&\tu^{k+1}=u^{k+1}+\grad{*} =\\
&= u^k + (W^k-I)(u^k+\grad{k}-B^kx^k)+\grad{*}=\\
&=W^k(u^k+\grad{*})+(W^k-I)(\grad{k}-\grad{*})+\\
&\ -(W^k-I)B^kx^k=\\
&=W^k\tu^{k}+(W^k-I)H_kq^k-(W^k-I)B^kq^k = \\
&=W^k_m\tu^{k-m+1}+\sum_{t=0}^{m-1}W^k_t(W^{k-t}-I)\left(H_{k-t}-B^{k-t}\right)q^{k-t}
\end{aligned}\end{equation}

By  \eqref{avgu2} and  Lemma \ref{lemmaW},
\begin{equation*}\begin{aligned}
\|W^k_m\tu^{k-m+1}\| &= \|W^k_mJ\tu^{k-m+1}\|\leq \nu\|J\tu^{k-m+1}\| =\\
& =\nu\|\tu^{k-m+1}\|
\end{aligned}\end{equation*}
and by \eqref{Hk}, the definition of $B^k$ and the fact that $W^k$ is doubly stochastic, we get
$$\|W^k_t(W^{k-t}-I)(H_{k-t}-B^k)q^{k-t}\|\leq (L+b)\|q^{k-t}\|.$$
Taking the norm in \eqref{utilde} and using the two previous inequalities, we have that for $k\geq m-1$
\begin{equation*}
\|\tu^{k+1}\|\leq\nu\|\tu^{k-m+1}\|+(b+L)\sum_{t=0}^{m-1}\|q^{k-t}\|.
\end{equation*}
Notice that the above inequality also holds for the third case considered, i.e. for $ B^k = 0, $ taking $ b=0 $.
Multiplying by $\frac{1}{\lambda^{k+1}}$, taking the maximum for $k=-1:\bar k-1$, and defining 
$$\tilde\omega_1 = \max_{k=-1:m-1}\left\{\frac{1}{\lambda^{k+1}}\|\tu^{k+1}\|\right\}$$
we get
\begin{equation*}\begin{aligned}
\|\tu\|^{\lambda\bar k}&= \max_{k=-1:m-1}\left\{\frac{1}{\lambda^{k+1}}\|\tu^{k+1}\|\right\}+\max_{k=m:\bar k}\left\{\frac{1}{\lambda^{k+1}}\|\tu^{k+1}\|\right\} \leq\\
&\leq \frac{\nu}{\lambda^m}\max_{k=m:\bar k}\left\{\frac{1}{\lambda^{k-m+1}}\|\tu^{k-m+1}\|\right\} +\\
&\ + (b+L)\sum_{t=0}^{m-1}\frac{1}{\lambda^t}\max_{k=m:\bar k}\left\{\frac{1}{\lambda^{k-t}}\|q^{k-t}\|\right\}+ \tilde\omega_1\leq\\
&\leq \frac{\nu}{\lambda^m}\|\tu\|^{\lambda\bar k} +\frac{(b+L)}{\lambda^m}\frac{\lambda(1-\lambda^m)}{(1-\lambda)}\|q\|^{\lambda\bar k} + \tilde\omega_1.
\end{aligned}\end{equation*}

Since by condition 1. in Lemma \ref{conditions} we have $\nu<\lambda^m$, reordering the terms in the previous inequality and using the fact that $q^k = \tx^k+e\bq^k$, we get
\begin{equation}\label{tugamma1}\begin{aligned}
\|\tu\|^{\lambda\bar k}&\leq\gamma_1\|q\|^{\lambda\bar k} + \omega_1 \leq\\
&\leq\gamma_1\|\tx\|^{\lambda\bar k}+\gamma_1n^{1/2}\|\bq\|^{\lambda\bar k} + \omega_1 
\end{aligned}\end{equation}
with 
$$\gamma_1 = \frac{(b+L)\lambda(1-\lambda^m)}{(1-\lambda)(\lambda^m-\nu)}, \phantom{space} \omega_1 = \frac{\lambda^m}{\lambda^m-\nu}\tilde\omega_1. $$

Let us now consider $\bq^k$.
\begin{equation*}\begin{aligned}
\bq^{k+1} &= \bx^{k+1}-y^* = \frac{1}{n}ee^Tx^{k+1}-y^*=\\
&=\frac{1}{n}ee^T\left(W^kx^k-D^k(u^k+\nabla F(x^k)\right)-y^* =\\
&= \bx^k-y^* -\frac{\dm}{n}\nabla F(\bx^k)+\\ &+\frac{\dm}{n}\sum_{j=1}^n(\nabla f_j(\bx^k)-\nabla f_j(y^*))+\\&-\frac{1}{n}\sum_{j=1}^n(d_j^k-\dm)(\nabla f_j(x^k_j)-\nabla f_j(y^*)) +\\ &+ \frac{1}{n}\sum_{j=1}^n(\dm-d^k_j)\tu^k_j.
\end{aligned}\end{equation*}
Taking the norm, by Lipschitz continuity of the gradient and denoting with $\Delta = \dM-\dm$, we have
\begin{equation*}\begin{aligned}
&\|\bq^{k+1}\|= \left\|\bx^k-y^* -\frac{\dm}{n}\nabla F(\bx^k)\right\|+ \frac{L\dm}{n}\|\bx^k-y^*\|_1+\\ &+
\frac{L\Delta}{n}\|x^k-x^*\|_1 + \frac{\Delta}{n}\|\tu^k_j\|_1.
\end{aligned}\end{equation*}
Now $\frac{\dm}{n}<\frac{2}{L}$, thus Lemma \ref{lemmaGM} gives a bound for the first term in the right hand side of the last inequality, and we get
\begin{equation*}\begin{aligned}
n^{1/2}\|\bq^{k+1}\|&\leq n^{1/2}\tau\|\bx^k-y^*\|+ L\dm\|\bx^k-y^*\|+\\ &+
L\Delta\|x^k-x^*\| + \Delta\|\tu^k_j\|\leq\\
&\leq n^{1/2}(\tau+\Delta L)\|\bq^k\|+L\dM\|\tx^k\|+\Delta\|\tu^k\|.
\end{aligned}\end{equation*}
Multiplying by $\frac{1}{\lambda^{k+1}}$ and taking the maximum for $k=-1:\bar k-1$ we get
\begin{equation*}
n^{1/2}\|\bq\|^{\lambda\bar k}\leq \frac{\tau+\Delta L}{\lambda}n^{1/2}\|\bq\|^{\lambda\bar k}+
\frac{L\dM}{\lambda}\|\tx\|^{\lambda\bar k} +
\frac{\Delta}{\lambda}\|\tu\|^{\lambda\bar k}.
\end{equation*}

By Lemma \ref{conditions} we have $\tau = 1-\mu\dm$ and $\tau+\Delta L<\lambda$, thus reordering and using \eqref{tugamma1}, we get
\begin{equation*}
\begin{aligned}
n^{1/2}\|\bq\|^{\lambda\bar k}&\leq
\frac{L\dM}{\lambda-\tau-\Delta L}\|\tx\|^{\lambda\bar k} +
\frac{\Delta}{\lambda-\tau-\Delta L}\|\tu\|^{\lambda\bar k}\leq\\
&\leq (\beta_1+\gamma_1\beta_2)\|\tx\|^{\lambda\bar k}+\gamma_1\beta_2n^{1/2}\|\bq\|^{\lambda\bar k}+\beta_2\omega_1
\end{aligned}\end{equation*}
where $\beta_1$ and $\beta_2$ are defined  in Lemma \ref{conditions}.
Take 
$$\gamma_2 = \frac{\beta_1+\gamma_1\beta_2}{1-\gamma_1\beta_2}, \phantom{sp}  \omega_2 = \frac{\beta_2\omega_1}{1-\gamma_1\beta_2}.$$
From 4. in Lemma \ref{conditions} we get
\begin{equation}\label{bq}
n^{1/2}\|\bq\|^{\lambda\bar k}\leq\gamma_2\|\tx\|^{\lambda\bar k}+\omega_2.
\end{equation}
$\ $\\
Finally, let us  consider $\tx^k$. For $k\geq m-1$, by definition of $x^k, \tilde u^k$ and $q^k,$ and equation \eqref{Hk} we have
\begin{equation*}\begin{aligned}
&\tx^{k+1}=J(W^kx^k-D^k(u^k+\nabla F(x^k)) = \\
&=JW^kW^{k-1}x^{k-1}-JW^kD^{k-1}(u^{k-1}+\grad{k-1})+\\
&-J D^{k}(u^{k}+\grad{k})=\\
&=JW^k_mx^{k-m+1}-J\sum_{t=0}^{m-1}W^k_tD^{k-t}(\tu^{k-t}+H_{k-t}q^{k-t}).
\end{aligned}\end{equation*}
Taking the norm, applying Lemma \ref{lemmaW} and \eqref{Hk}, we get
\begin{equation*}\begin{aligned}
\|\tx^{k+1}\|&\leq\nu\|\tx^{k-m+1}\|+\dM\sum_{t=0}^{m-1}(\|\tu^{k-t}\|+L\|q^{k-t}\|).
\end{aligned}\end{equation*}

Multiplying by $\frac{1}{\lambda^{k+1}}$ and taking the maximum for $k=-1:\bar k-1$
we get
\begin{equation*}\begin{aligned}
\|\tx\|^{\lambda\bar k}&\leq\frac{\nu}{\lambda^m}\|\tx^{k-m+1}\|+\dM\frac{\lambda(1-\lambda^m)}{\lambda^m(1-\lambda)}\|\tu\|^{\lambda\bar k}+\\
&+L\dM\frac{\lambda(1-\lambda^m)}{\lambda^m(1-\lambda)}\|q\|^{\lambda\bar k}+\tilde\omega_3\leq\\
&\leq\beta_3\|\tx^{k-m+1}\|+\beta_4n^{1/2}\|\bq\|^{\lambda\bar k}+\beta_5\|\tu\|^{\lambda\bar k}+\tilde\omega_3.
\end{aligned}\end{equation*}
where $\tilde\omega_3 = \max_{k=-1:m-1}\left\{\frac{1}{\lambda^{k+1}}\|\tx^{k+1}\|\right\}$ and $\beta_3, \beta_4, \beta_5$ are defined  in Lemma \ref{conditions}. In particular, we have $\beta_3<1$, and can rearrange the terms of the previous inequality to  get 
\begin{equation*}\begin{aligned}
\|\tx\|^{\lambda\bar k}&\leq \frac{\beta_4}{1-\beta_3}n^{1/2}\|\bq\|^{\lambda\bar k}+\frac{\beta_5}{1-\beta_3}\|\tu\|^{\lambda\bar k}+\frac{\tilde\omega_3}{1-\beta_3}.
\end{aligned}\end{equation*}
Now, applying \eqref{tugamma1} and 6. from Lemma \ref{conditions}, we obtain
\begin{equation*}\begin{aligned}
\|\tx\|^{\lambda\bar k}&\leq\gamma_3n^{1/2}\|\bq\|^{\lambda\bar k}+\omega_3
\end{aligned}\end{equation*}
with
$$\gamma_3 = \frac{\beta_4+\beta_5\gamma_1}{1-\beta_3-\gamma_1\beta_5}, \phantom{sp} 
\omega_3= \frac{\tilde\omega_3+\beta_5\omega_1}{1-\beta_3-\gamma_1\beta_5}.$$
$\ $\\

We thus proved
\begin{equation*}\begin{aligned}
&n^{1/2}\|\bq\|^{\lambda\bar k}\leq\gamma_2\|\tx\|^{\lambda\bar k}+\omega_2\\
&\|\tx\|^{\lambda\bar k}\leq\gamma_3n^{1/2}\|\bq\|^{\lambda\bar k}+\omega_3\\
\end{aligned}\end{equation*}
with $\gamma_2\gamma_3<1$ by condition 7. in Lemma \ref{conditions}. By the Small Gain Theorem, we have that $\|\bq^k\|$ and $\|\tx^k\|$ converge to 0, and thus $\|q^k\|$ converges to zeros, which gives the thesis.

\end{proof}

The above theorem states the R-linear convergence of the method and hence one might naturally ask what is the convergence factor and how it compares with similar methods, in particular with DIGing.  Given that the setting here is rather general -- allowing for node-specific and iteration-specific step sizes on time varying networks,  the generality of the encompassed methods and settings makes analytical close-form expression of the convergence factor unfeasible. One comparison between the convergence factors of DIGing and a method of the class considered here under restrictive assumptions of a static network is given in \cite{dusan}, Remark 5. Therein, it is shown that the class of methods considered here can have a better convergence factor than DIGing. Specifically, the favorable convergence factor is achieved in \cite{dusan} for a method instance within the class when parameter $b$ is set differently than the choice that recovers DIGing. Although such comparison is derived for a narrow class of problems, contrary to the more general setting of DIGIng and the setting considered here, with fixed stepsizes and static networks,  it serves as an indication for the comparison between DIGing and the method considered here. Furthermore, numerical experiments presented in Section 4 contain the comparison between the method considered here and DIGing and show faster convergence and an increased degree of robustness of the method considered here.

\section{Analytical and Numerical Studies of Robustness of the Methods}
Theorem \ref{thm} and Lemma \ref{conditions} ensure convergence of the considered class of methods. Namely, they establish existence of bounds $\dm<\dM$ such that the methods converge $R$-linearly under the given assumptions. However they do not provide any information about the difference $\Delta = \dM-\dm$ and thus about how much the steps employed by different nodes and at different iterations can differ. In this section we try to address this issue by investigating in practice the length of the interval of admissible step-sizes. Firt we show a particular example where the method converges without any upper bound $\dM,$
then we present a set of numerical results that show how the step bounds influence the convergence and the performance of the methods.\\

We consider the same framework considered in \cite{dsg} (section 4.2) and we prove that even if we allow the consensus matrix to change from iteration to iteration, the method converges. Consider the following objective function
\begin{equation}\label{simpleproblem}
f(y) = \sum_{i=1}^nf_i(y)\text{ with } f_i(y) = \frac12(y-a_i)^2\text{ and } a\in\Rn \end{equation}

and assume that at iteration $k$ the consensus matrix is given by $$W_k = (1-\theta_k)I+\theta_k J, \text{ with } \theta\in(0,1).$$

\begin{lemma}
Assume that $\theta_k\in(\frac13, \frac34)$ for every $k$, and that $\{x^k\}$ is the sequence generated by \eqref{it} with $b=0$, $e^Tx_0 = e^Ta$ and $e^T(u_0+\nabla F(x_0)) = 0$.\\
If $d^k_i = \alpha$ for every $i=1,\dots,n$ and for every $k$, then the method converges $R$-linearly to the solution of \eqref{simpleproblem} if $\alpha_{min}\leq\alpha\leq\frac23$ and $\alpha_{min}>0$ small enough. On the other hand, for any $\alpha>2$, there exists a sequence $\{\theta_k\},\ k=0,1,2,\dots$ that satisfies the assumptions of the Lemma such that the method diverges, i.e., $\|x^k\| \rightarrow \infty$.\\
If $d_i^k = (\sigma^k_i)^{-1}$ with
\begin{equation}\label{spectralstep}
\sigma^{k+1}_i = \mathcal{P}_{[\sigma_{min}, \sigma_{max}]}\left(1+\sigma^k_i\sum_{j=1}^nw^k_{ij}\left(1-\frac{s^k_j}{s^k_i}\right)\right), 
\end{equation}
with $s^k_i = x^{k+1}_i-x^k_i$, $\sigma_{min} = 0$, $\sigma_{max} = 3/2$ and $\sigma^0_i = \sigma\in(\sigma_{min}, \sigma_{max})$ for every $1,\dots,n$, then $\{x^k\}$ converges $R$-linearly to the solution of \eqref{simpleproblem}.\\

\begin{proof}
In the case we are considering, \eqref{it} is equivalent to 
\begin{equation}
\begin{cases}
x^{k+1} = W^kx^k-D^kz^k\\
z^{k+1} = W^kz^k + x^{k+1}-x^k
\end{cases}
\end{equation}
where $D^k = diag(d^k_1I,\dots,d^k_nI)$.\\
Let us consider the case with fixed step-size $d^k_i = \alpha$ and let us denote with $\xi^k$ the vector $(q^k, z^k)\in\R^{2n}$. We can see that for every $k$ we have $\xi^{k+1} = A_k\xi^k$ where the matrix $A_k$ is given by
$$A_k = \left(\begin{matrix} W_k-J & -\alpha I\\ W_k-I & W_k-\alpha I
\end{matrix}\right)\in \R^{2n\times 2n}.$$ In order to prove the first part of the Lemma, it is enough to show that there exists $\mu<1$ such that $\|A_k\|^2_2<\mu$ for every iteration index $k$. That is, we have to prove that there exists $\mu<1$ such that the spectral radius of $A_k^TA_k$ is smaller than $\mu$ for every $k$. Denoting with ${1, \lambda^k_2,\dots,\lambda^k_n}$ the eigenvalues of $W^k$, it can be proved that the eigenvalues of $A_k^TA_k$ are given by the eigenvalues of the $2\times 2$ matrices $M^k_i$ defined as
$$ M^k_1 = \left(\begin{matrix} \alpha^2 & \alpha(\alpha-1) \\ \alpha(\alpha-1) & (\alpha-1)^2 
\end{matrix}\right)$$
$$
 M^k_i = \left(\begin{matrix} (\lambda^k_i)^2+\alpha^2 & (\lambda^k_i)^2-(1+\alpha)\lambda^k_i+\alpha^2 \\ (\lambda^k_i)^2-(1+\alpha)\lambda^k_i+\alpha^2& 2(\lambda^k_i)^2-2(1+\alpha)\lambda_i^k + 1+\alpha^2
\end{matrix}\right)$$
for $i=2,\dots,n.$
By direct computation we can see that the eigenvalues of $M^k_1$ are given by $0$ and $2\alpha^2-2\alpha+1<1-\frac{2}{3}\alpha_{min}$ and therefore it is enough to take $\mu>1-\frac{2}{3}\alpha_{min}.$ Denoting with $p^k_i(t)$ the characteristic polynomial of $D^k_i$ we can see that with the values of $\theta_{min}, \theta_{max}$ and $\alpha_{max}$ given by the assumptions, we can always find $1-\frac{2}{3}\alpha_{min}<\mu<1$ such that $p^k_i(\mu)>0$ and $p^k_i(-\mu)>0$ and thus such that the eigenvalues of $M^k_i$ belong to $(-\mu$ and $\mu)$ for every $k$ and for every $i=1,\dots,n.$ To prove that if $\alpha>2$ the method is in general not convergent it is enough to consider the case when $\theta_k = \theta_0$ for every iteration index $k$. In this case we have that $A_k = A_0$ for every $k$ and thus $\xi^{k}= A_0^k\xi^0$. In this case we can see \cite{dsg} that $1-\alpha$ is an eigenvalue of $A_0$ an therefore if $\alpha>2$ we have that  $\rho(A_0)>1$ and thus the sequence $\{\xi^k\}$ does not converge.
 This concludes the first part of the proof.\\

Assume now that the step-sizes are computed as in \eqref{spectralstep}. Proceeding as in the proof of Proposition 4.3 in \cite{dsg} we can prove that $\sigma^{k+1}_i = \sigma^{k+1}$ for every $i$ with $\sigma_{k+1}$ given by
$$\sigma_{k+1} = \begin{cases}
\min\{\sigma_{max}, 1+\sigma_{max}\theta_k\} & \text{if }\ \sigma_k = \sigma_{max}\\
\min\{\sigma_{max}, \hat\sigma^{k+1}\} & \text{otherwise}
\end{cases} $$
where $\hat\sigma^{k+1} = 1+\theta_k+\theta_k\theta{k-1}+\dots+\prod_{j=1}^k\theta^j+\sigma^0\prod_{j=0}^k\theta^j.$
By using the fact that $\theta_k > 1/3$ and $\sigma_{max} = 3/2$ we can prove that there exists $\bar k$ such that $\sigma^k = \sigma_{max}$ for every $k>\bar k.$ Therefore, for $k>\bar k$ the step-size becomes the same for all nodes and equal to $d^k_i = \sigma_{max}^{-1} = 2/3$ and thus the method converges by the first part of the Lemma. \end{proof}
\end{lemma}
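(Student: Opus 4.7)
Since each $f_i$ is a one--variable quadratic with $\nabla f_i(y)=y-a_i$, the iteration \eqref{it} collapses to a linear recursion. Introducing $z^k=u^k+\nabla F(x^k)$ and using $b=0$, one checks that
\begin{equation*}
x^{k+1}=W^k x^k-D^k z^k,\qquad z^{k+1}=W^k z^k+x^{k+1}-x^k,
\end{equation*}
so for the stacked state $\xi^k=(q^k;z^k)\in\R^{2n}$ with $q^k=x^k-x^*$ we obtain $\xi^{k+1}=A_k\xi^k$, where $A_k$ depends only on $W_k$ and $D^k$. The whole lemma then reduces to a question about the behaviour of the products $A_{k-1}\cdots A_0$.

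For the first (constant--step) claim I would use the spectral decomposition $W_k=(1-\theta_k)I+\theta_k J$: $W_k$ fixes $\mathrm{span}\{e\}$ and acts as multiplication by $1-\theta_k$ on $e^{\perp}$. A short computation starting from $e^T x_0=e^T a$ (so $\bar q^0=0$) and $e^T(u_0+\nabla F(x_0))=0$ (so $\bar z^0=0$) shows that $\bar q^k$ and $\bar z^k$ satisfy a closed recursion initialised at zero, hence stay at zero for all $k$, so the whole trajectory lives on the $2(n-1)$-dimensional subspace $e^{\perp}\times e^{\perp}$. On that subspace the action of $A_k$ decouples into $n-1$ identical copies of the $2\times 2$ block
\begin{equation*}
M(\theta_k,\alpha)=\begin{pmatrix}1-\theta_k & -\alpha\\ -\theta_k & 1-\theta_k-\alpha\end{pmatrix},
\end{equation*}
whose eigenvalues work out to $\tfrac12\bigl(2-2\theta_k-\alpha\pm\sqrt{\alpha^2+4\alpha\theta_k}\bigr)$. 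I would then verify directly that both eigenvalues stay strictly inside $(-1,1)$ for every $(\theta,\alpha)\in[1/3,3/4]\times[\alpha_{\min},2/3]$ and use continuity plus compactness to extract a uniform constant $\mu<1$ bounding $\|M(\theta_k,\alpha)\|_2$ for all $k$; this yields $\|\xi^{k+1}\|\le\mu\|\xi^k\|$ uniformly and hence $R$-linear convergence. Conversely, for the divergence assertion with $\alpha>2$ I would take $\theta_k\equiv\theta_0\in(1/3,3/4)$ so that $A_k\equiv A_0$ is fixed: the smaller root $\tfrac12\bigl(2-2\theta_0-\alpha-\sqrt{\alpha^2+4\alpha\theta_0}\bigr)$ of $M(\theta_0,\alpha)$ drops below $-1$ as soon as $\alpha>2$, and picking an initial condition (still compatible with the two initialisation constraints) whose projection on the corresponding eigenvector of $A_0$ is nonzero forces $\|x^k\|\to\infty$.

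For the spectral--step claim I would exploit the full permutation symmetry of the setting: the common initial value $\sigma_i^0=\sigma$, together with the symmetric form $W_k=(1-\theta_k)I+\theta_k J$ and the symmetric update \eqref{spectralstep}, yields by induction on $k$ that $\sigma_i^k=\sigma^k$ for every $i$ and every $k$, so the step--size stays common across all nodes. In that regime \eqref{spectralstep} collapses to a scalar recursion driven purely by $\theta_k$; the lower bound $\theta_k>1/3$ combined with the cap $\sigma_{\max}=3/2$ forces the unprojected value to overshoot $\sigma_{\max}$ after finitely many steps, so from some iteration $\bar k$ on $\sigma^k\equiv 3/2$. The method therefore reduces, for $k\ge\bar k$, to the constant--step method with $\alpha=1/\sigma_{\max}=2/3$, and the first part of the Lemma already proved yields $R$-linear convergence. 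The step I expect to be the most obstructive is the uniform contraction bound on $M(\theta,\alpha)$: one has to verify explicitly that the smaller eigenvalue does not touch $-1$ anywhere on the closed box $[1/3,3/4]\times[\alpha_{\min},2/3]$, and this explicit quadratic inequality is precisely what pins down the upper safeguard $\alpha\le 2/3$. A secondary difficulty is handling the degenerate case $s^k_i=0$ in \eqref{spectralstep}, which the synchronisation $\sigma_i^k=\sigma^k$ fortunately removes by eliminating the ratios $s^k_j/s^k_i$ before the projection is evaluated.
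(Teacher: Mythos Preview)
Your overall structure matches the paper's: reformulate via $z^k=u^k+\nabla F(x^k)$, reduce to a linear recursion $\xi^{k+1}=A_k\xi^k$, block-diagonalise along the eigenspaces of $W_k$, and for the spectral-step part argue that the $\sigma_i^k$ synchronise across nodes and eventually freeze at $\sigma_{\max}=3/2$, so that the first part applies with $\alpha=2/3$. Your restriction to $e^{\perp}\times e^{\perp}$ via the initialisation constraints $\bar q^0=\bar z^0=0$ is a clean simplification; the paper instead keeps the full $A_k$ and treats the $e$-direction as a separate $2\times 2$ block $M_1^k$. The divergence argument for $\alpha>2$ with constant $\theta_k\equiv\theta_0$ is also in line with the paper (the paper invokes the eigenvalue $1-\alpha$ of $A_0$ on the $e$-direction, whereas you use an eigenvalue of $M(\theta_0,\alpha)$ on $e^{\perp}$; both work).

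There is, however, a genuine gap in your convergence argument for the constant-step case. You propose to check that both \emph{eigenvalues} of the non-symmetric matrix $M(\theta,\alpha)$ lie in $(-1,1)$ and then conclude by compactness that $\|M(\theta_k,\alpha)\|_2\le\mu<1$ uniformly, hence $\|\xi^{k+1}\|\le\mu\|\xi^k\|$. This implication fails: for a non-normal $2\times 2$ matrix the spectral radius does not bound the operator norm, and---more to the point for a \emph{time-varying} product $M(\theta_{k-1},\alpha)\cdots M(\theta_0,\alpha)$---a uniform bound $\rho(M(\theta,\alpha))\le\rho_0<1$ over the family does not force contraction (the relevant quantity would be the joint spectral radius, not the individual spectral radii). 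The paper avoids this by bounding the eigenvalues of $A_k^TA_k$, i.e.\ the \emph{singular values} of $A_k$ (equivalently, of your $M(\theta_k,\alpha)$ on $e^{\perp}$); that genuinely yields $\|A_k\|_2<1$ and hence step-by-step contraction in the Euclidean norm. The fix for your plan is simply to replace the eigenvalue check on $M$ by the check that the larger eigenvalue of $M(\theta,\alpha)^{T}M(\theta,\alpha)$ stays strictly below $1$ on the closed box $[1/3,3/4]\times[\alpha_{\min},2/3]$; your continuity--compactness step then goes through exactly as written.
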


The above Lemma certifies convergence 
of the spectral-like method \cite{dsg} for 
time-varying networks and a very specific problem structure with 
all-to-all communication network and consensus quadratic costs.
 It is worth noting that, for generic quadratic cost functions and sparse time-varying networks, an upper bound on the step-size is necessary (see Figures 1 and 2 below). 
We now make an analogy on the achieved results for the distributed spectral-like method \cite{dsg} and the spectral (Barzilei-Borwein) 
 gradient method from the centralized optimization.
Namely,  
in centralized settings, the spectral gradient method's  convergence without step size safeguarding has been proved only for a \emph{strictly convex quadratic cost function}.   In the case of generic functions beyond strictly convex quadratic, 
some safeguards $\Delta_{min}$ and $\Delta_{max}$ are necessary, even in the centralized case.  
Though, in the centralized case, these safeguards can be arbitrarily small ($\Delta_{min}$) and 
arbitrarily large ($\Delta_{max}$). 
Therefore, the need for safeguards is to be expected in the distributed optimization scenario as well. This matches with the results that we present here. 
 It turns our that the price to be payed in the distributed time-varying networks scenario is two-fold: 1) 
 the no-safeguards case happens in a more restricted cost functions setting, namely the consensus quadratic costs (see Lemma 4); and 2) the safeguard step size bounds in the general case are no longer arbitrary 
 and take a network-dependent form.\\
 

We also have the following Lemma where we continue to assume the consensus problem but relax the requirement that the network is fully connected at all times. 
 When the network is not fully connected, in general we need safeguarding for global convergence. However, as explained below, the following Lemma sheds some light on the behavior of the spectral-like distributed method. While it is not to be considered as a global convergence result, it highlights that the next step size has a controlled length provided that the current solution estimate is close to consensus.

\begin{lemma}
Let us assume that the objective function is given by \eqref{simpleproblem}, and that $x^0,z^0$ are such that $e^Tx^0 = e^Ta$, $z^0_i = \nabla f(x^0_i) = x^0_i$. Moreover, for every $i=1,\dots,n$ let the local stepsize $d^k_i$ be defined as $d^0_i = d^0>0$ and, for every $k\geq1$, $d^k_i = 1/\sigma^k_i$, with
\begin{equation}\label{sigma2}
    \sigma_i^{k+1} = 1+\sigma^k_i\sum_{j=1}^nw^k_{ij}\left(1-\frac{s^k_j}{s^k_i}\right)
\end{equation}
where $s^k_j = x^{k+1}_j-x^k_j$. Moreover, let us assume that at each iteration assumption A2 holds with $m=1$.\\
Given any $\hat d>1$, if $\|x^0-e\bar x^0\|\leq\hat\varepsilon$ with $\hat\varepsilon$ satisfying 
$$\hat\varepsilon\leq\frac{1}{\nu_0+d^0}\frac{(d^0)^2(\hat d-1)|\bar x^0|}{2\hat d + d^0(\hat d-1)}$$
then $d^1_i\leq\hat d$ for every $i=1,\dots,n.$
\begin{proof}
Let us denote with $J\in\R^{n\times n}$ the matrix $\frac{1}{n}ee^t$ and with $v^k = s^k-e\bar s^k$. From the assumptions and the double stochasticity of the matrix $W^k$, we have  
\begin{equation*}
    \begin{aligned}
    &\|v^0\| = \|(I-J)(x^1-x^0)\| = \|(I-W)(W^0x^0 - x^0-d^0z^0)\|\\
    &= \|(I-J)(W^0-I-d^0I)x^0\|\leq\|(W^0-I)x^0\|+d^0\|(I-J)x^0\|\\
    &\leq (\nu_0+d^0)\|x^0-e\bar x^0\|\leq (\nu_0+d^0)\hat\varepsilon = \varepsilon
    \end{aligned}
\end{equation*}
Where we defined $\varepsilon = (\nu_0+d^0)\hat\varepsilon. $
Moreover, $$ |\bar s^0| = \frac{1}{n}|e^t(W^0x^0-d^0x^0-x^0)| = d^0|\bar x^0|.$$
These imply that, for every $j=1,\dots,n$
$$1-\frac{2\varepsilon}{d^0|\bar x^0|-\varepsilon}\leq \frac{s^0_j}{s^0_i}\leq 1+\frac{2\varepsilon}{d^0|\bar x^0|-\varepsilon}.$$
Replacing these bounds in \eqref{sigma2}, and defining $\sigma^0 = 1/d^0$, we get
\begin{equation}\label{sigma_bounds}1-\frac{2\varepsilon\sigma^0}{d^0|\bar x^0|-\varepsilon}\leq \sigma^1_i\leq 1+\frac{2\varepsilon\sigma^0}{d^0|\bar x^0|-\varepsilon}.\end{equation}
It's easy to see that the first inequality, together with the assumption over $\hat \varepsilon$, imply $\sigma^1_i\geq 1/\hat d$, which in turn implies the thesis. 

\end{proof}
\end{lemma}

Intuitively, the Lemma above says that, for the considered problem, if algorithm \eqref{it} with stepsize \eqref{sigma2} starts from a
point close to consensus (i.e., a point where solution estimates across different nodes are mutually close), then 
the next step size at each node will not be too large. More precisely, the size of the next step size is controlled by the consensus neighborhood $\hat{\epsilon}$
 that we start from. In other words, if the next step size is to be upper bounded by an arbitrary constant $\hat{d}>1$, we can find a problem-dependent 
 constant $\hat{\epsilon}$ such that, starting at most 
 $\hat{\epsilon}$ away from consensus, the next step size at each node is 
 at most $\hat{d}$. To further explain this, 
suppose that all the quantities $s_j^k/s_i^k$'s are $\epsilon^\prime$-close to one, 
$|s_j^k/s_i^k| \in (1-\epsilon^\prime, 1+\epsilon^\prime)$, for all nodes $i,j$. Then, in view of \eqref{sigma_bounds}, quantity $\sigma_i^{k+1}$, for all nodes $i$, 
is approximated as:
\[
1 \pm  \sigma_i^k \,n\, \epsilon^\prime.
\]
In other words, for the special case of the consensus problem, 
 provided that all the quantities $s_j^k/s_i^k$'s are $\epsilon^\prime$-close to one,
  the next step-size $1/\sigma_i^{k+1}$ is in a neighborhood of 
  one, and is hence bounded.

We now present some numerical results. We consider the problem of minimizing a logistic loss function with $l_2$ regularization, that is, we assume the local objective function $f_{i}$ at node $i$ is given by 
\begin{equation}\label{logreg}
f_i(y) = \ln\left(1+\exp(-b_ia_i^Ty)\right) + \frac12R\|y\|^2_2
\end{equation}
where $a_i\in\R^d,\ b_i\in\{-1,1\}$ and $R>0.$
We compare 3 different choices of the matrix $B$ in \eqref{it} and three different definitions of the step-sizes $d^k_i$, resulting in nine methods. For increasing values of $\dM$ we run each method on the given problem and we plot in Figure 1 the number of iterations necessary to arrive at convergence. \\
The problem is generated as follows.
The convergence analysis we carried out in Section 3 does not rely on any particular definition of the step-sizes $d^k_i,$ therefore we need to specify how each node chooses the step-size at each iteration. We consider here two cases. The first one, referred to as \emph{spectral} in Figure 1, is the case where $d^k_i = (\sigma^k_i)^{-1}$ with $\sigma^k_i$ as in \eqref{spectralstep}. The second case we consider is the one where each node performs local line search by employing a backtracking strategy starting at $\dM$ to satisfy classical Armijo condition on the local objective function. That is, to satisfy 
$$ f_i\left(\sum_{j=1}^n w^k_{ij}x^k_j-d^k_i z^k_i \right)\leq f_i(x^k_i) - cd^k_i\nabla f_i(x^k_i)^Tz^k_i$$
with $c = 10^{-3}$ and $z^k_i = u^k_i+\nabla f_i(x^k_i).$
 We refer to this method as \emph{line search}.

 It is worth noting that there are no convergence guarantees for the 
 line search method. 
The rationale for including a comparison with it is to show that the method \cite{dsg} exhibits a significantly higher degree of robustness with respect to a meaningful, time-varying and node-varying, local step size strategy that can be employed.
For comparison, we also consider the method with fixed step-size $d^k_i = \dM $ for every $k$ and every $i=1,\dots,n.$ The choices of the matrix $B^k$ are given by $B^k=0$ (plot (a) in Figure 1), $\ B^k=\dM^{-1}I$ (plot (b)) and $B^k = \dM^{-1}\mathcal W^k$ (plot(c)), where for the case $B\neq 0$ the choice is made following \cite{dusan}. Notice that the case $d_i^k=d_{max}$ and $B^k=0$ corresponds to \cite{diging, harnessing} with constant, coordinated step-sizes. We consider increasing values of $\dM$ in $[\frac{1}{50L}, \frac{10}{L}]$, while we fix $\dm=10^{-8}$ as, in the considered framework, we saw that its choice does not influence the performance of the methods significantly. \\
In Figure 1 we plot the results in the case where the underlying network is symmetric and timevarying, defined as follows: we consider a network $G$ with $n=25$ nodes undirected and connected, generated as a random geometric graph with communication radius $\sqrt{n^{-1}\ln(n)}$, and we define the sequence of networks $\{G^k\}$ by deleting each edge with probability $\frac14.$ We carried out analogous tests in the cases where $G$ is symmetric and constant and in the case where it is given by a directed ring. The obtained results were comparable to the ones that we present.  We also observed in practice that double stochasticity of the underlying network appears to be essential for the convergence of the considered methods.\\
We set the dimension $d$ as equal to 10 and we generate the quantities involved in the definition of the local objective functions \eqref{logreg} as follows.
For $i = 1,\dots,n$ we define $a_i = (a_{i1},\dots,a_{i,d-1},1)^T$ where the components $a_{ij}$ are independent and come from the standard normal distribution, and $b_i = sign(a_i^Ty^*+\epsilon_i)$ where $y^*\in\R^d$ with independent components drawn from the standard normal distribution, and $\epsilon_i$ are generated according to the normal distribution with mean 0 and standard deviation 0.4. Finally, we take the regularization parameter $R = 0.25.$ The initial vectors $x^0_i$ are generated independently, with components drawn from the uniform distribution on $[0,1]$, and at each iteration we define the consensus matrix $W_k$ as the Metropolis matrix \cite{metropolis}.

We are interested in the number of iterations required by each method to reach a prescribed accuracy. More precisely, we evaluate the iteration number $\bar k$ at which 
$\max_{i=1,...,n}\|x_i^{\bar k}-y^*\|< \varepsilon$, 
where $\varepsilon =10^{-5}$.

In Figure 1, on the $x$-axis we show the upper bound $\dM$ while on the $y$-axis we show $\bar k$ for each method. To facilitate the comparison among the methods, in Figure 2 we plot the same results, with $y$-axis cut at 2000. \\

\begin{figure}[h]
    \begin{subfigure}[b]{0.48\textwidth}
      
        \includegraphics[width = \textwidth]{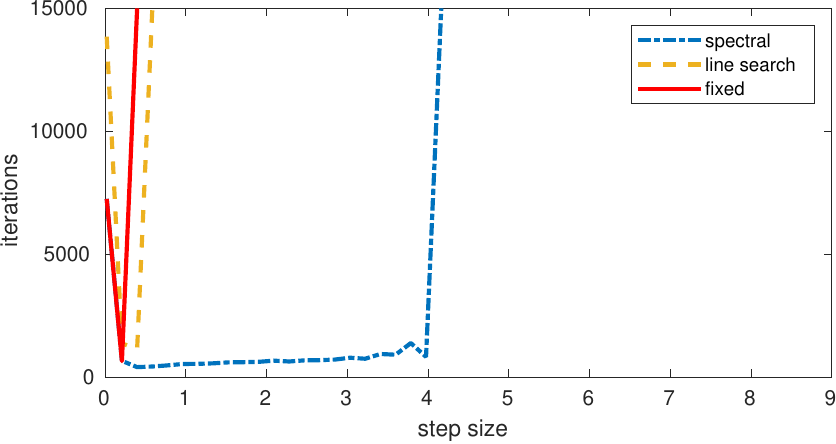}
        \caption{$B=0$}
    \end{subfigure}
    \hfill
    \begin{subfigure}[b]{0.48\textwidth}
        \includegraphics[width = \textwidth]{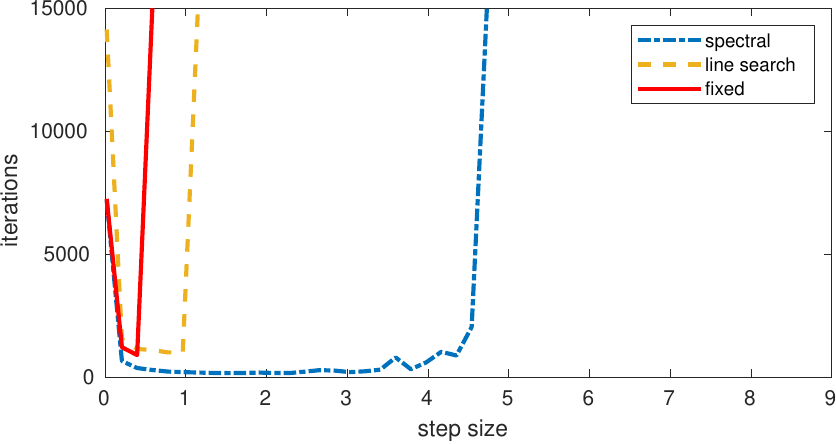}

        \caption{$B=bI$}
    \end{subfigure}
    \vskip\baselineskip
    \begin{subfigure}[b]{\textwidth}
        \centering
        \includegraphics[width = 0.48\textwidth]{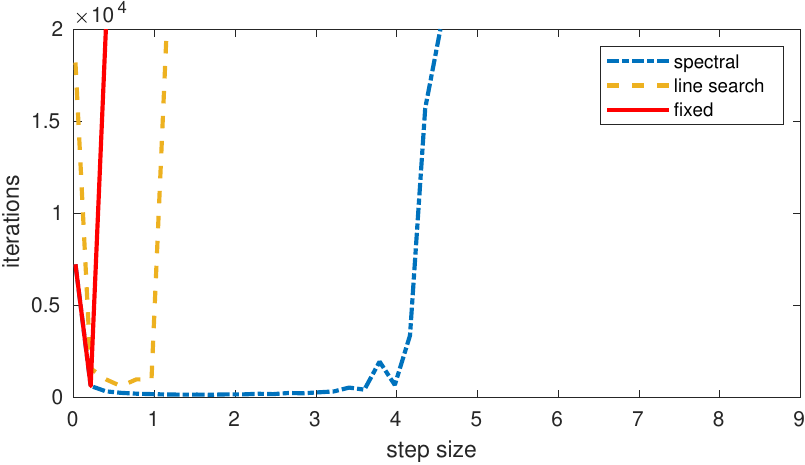}
		\caption{$B=bW$}
    \end{subfigure}
    \caption{}
\end{figure}

\begin{figure}[h]
    \begin{subfigure}[b]{0.48\textwidth}
      
        \includegraphics[width = \textwidth]{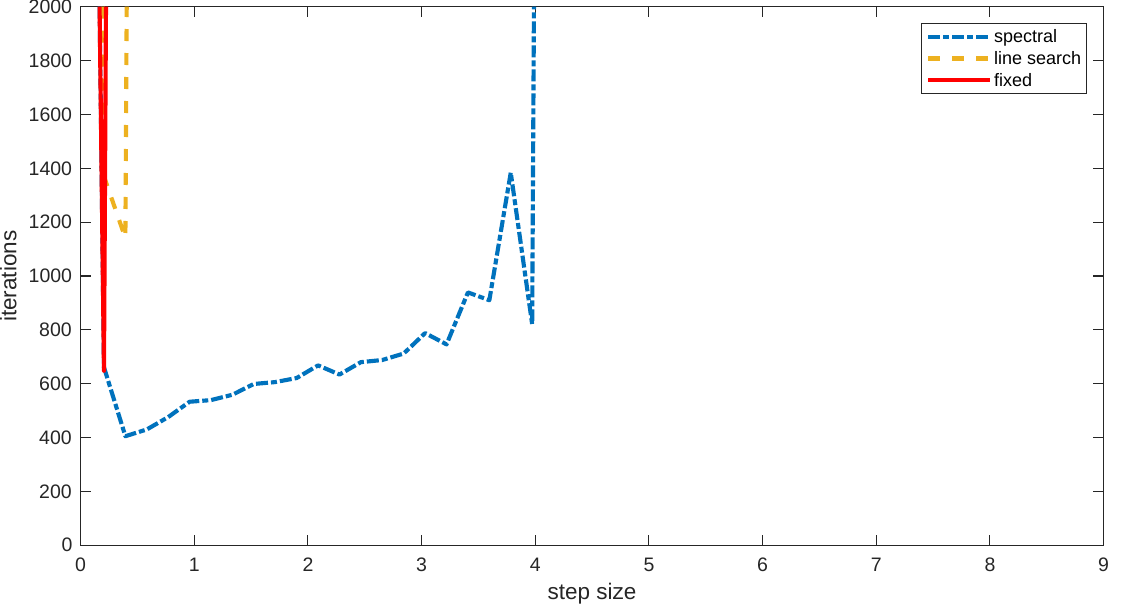}
        \caption{$B=0$}
    \end{subfigure}
    \hfill
    \begin{subfigure}[b]{0.48\textwidth}
        \includegraphics[width = \textwidth]{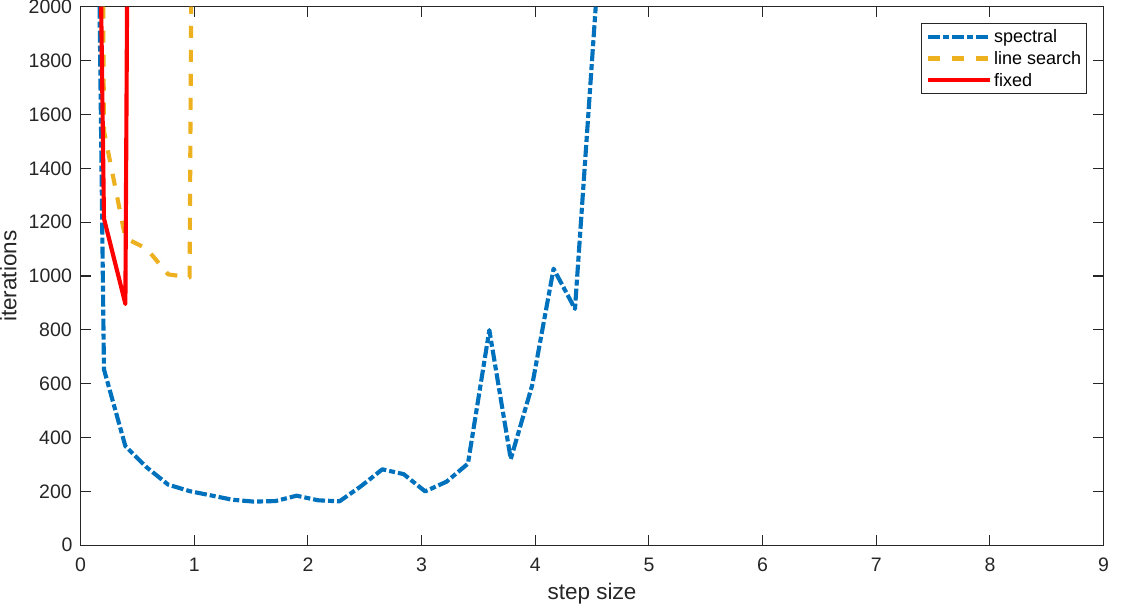}

        \caption{$B=bI$}
    \end{subfigure}
    \vskip\baselineskip
    \begin{subfigure}[b]{\textwidth}
        \centering
        \includegraphics[width = 0.48\textwidth]{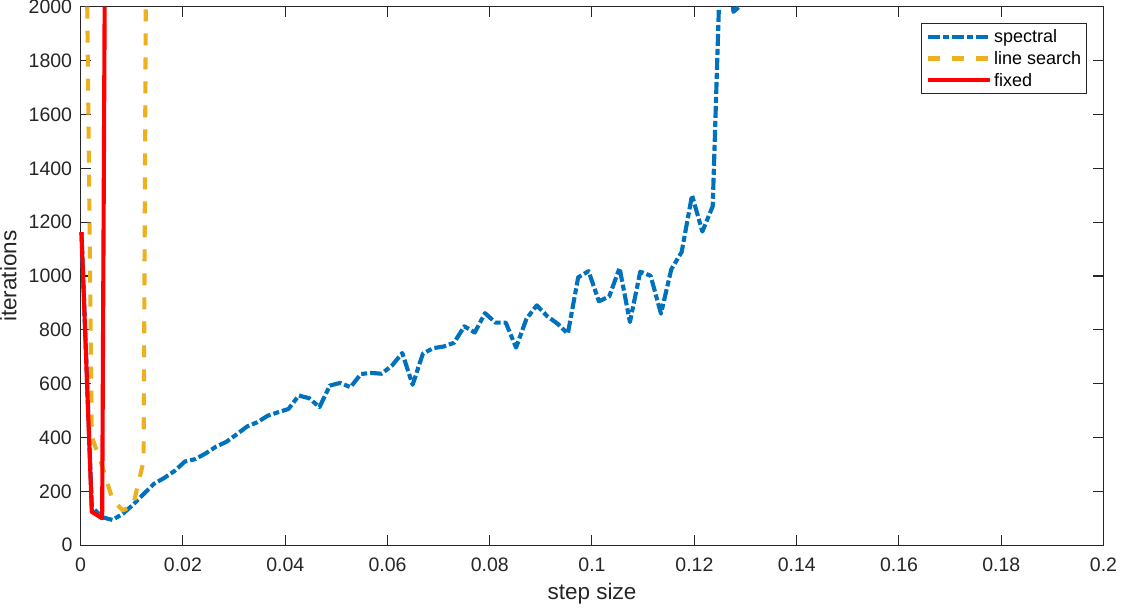}
		\caption{$B=bW$}
    \end{subfigure}
    \caption{}
\end{figure}

We can see from Figure 1 that for all considered choices of the matrix $B$ the spectral method allows for maximum step-size that is at least 10 times larger than the method with fixed step-size, while line search allows for maximum step-size equal to 2 and 3 times the maximum step-size allowed by the method with fixed steplength, for $B$ equal to $0$ and $bI$ or $b\mathcal{W}$ respectively. Moreover, we can see that choosing $B = bI$ seems to increase the maximum value of $\dM$ that yields convergence for all the considered methods. Finally, in Figure 2, we can notice that for most of the tested values of $\dM$ the spectral methods requires a smaller number of iterations than the method with fixed step-size. That is, in the considered framework, using uncoordinated time-varying step-sizes given by \cite{dsg} helps to significantly improve the robustness of the method and also the performance. Notice also that the spectral step-size strategy exhibits a ``stable", practically unchanged, performance for a wide range of $\dM$; hence, it is not sensitive to tuning of $\dM$. This is in contrast with the constant step-size strategy that is very sensitive to the step-size choice $\dM$. 
It is also worth noting that Theorem 2 requires a conservative upper bound on the step-size $\dM$ and a conservative upper bound on step-size differences $\Delta$ and that both depend on multiple global system parameters (Lemma 3). However, simulations presented here and other extensive numerical studies suggest that an a priori upper bound on $\Delta$ is not required for convergence. In addition, $\dm$ can be set to a small value independent of system parameters, e.g., $\dm =10^{-8}$, and setting $\dM$ requires only a coarse upper bound on quantity $1/L$.

\section{Conclusions}
We proved that a class of distributed first-order methods, including those proposed in \cite{dusan, dsg}, is robust to time-varying and uncoordinated step-sizes and time-varying weight-balanced digraphs, wherein connectedness of the network at each iteration, unlike, e.g., the recent work \cite{lessard}, is not required. The achieved results provide a solid improvement in understanding of the robustness of exact distributed first-order methods to time-varying networks and uncoordinated time-varying step-sizes. Most notably, we show that the unification strategy in \cite{dusan} and the spectral-like step-size selection strategy in \cite{dsg}, as well as combination of those, exhibits a high degree of robustness. This paper considers weight-balanced directed networks. Extensions to weight-imbalanced networks requires redefining the algorithmic class and the respective analysis, and represents an interesting future research direction.

\end{document}